\documentclass[12pt]{amsart}

\setcounter{secnumdepth}{1}
\usepackage[matrix,arrow,curve,frame]{xy}
\usepackage{amsmath,amsthm,amssymb,enumerate}
\usepackage{latexsym}
\usepackage{amscd}
\usepackage[colorlinks=true]{hyperref}
\usepackage{euscript}
\usepackage{url}

\setlength{\oddsidemargin}{0in} \setlength{\evensidemargin}{0in}
\setlength{\marginparwidth}{0in} \setlength{\marginparsep}{0in}
\setlength{\marginparpush}{0in} \setlength{\topmargin}{0in}
\setlength{\headheight}{0pt} \setlength{\headsep}{0pt}
\setlength{\footskip}{.3in} \setlength{\textheight}{9.2in}
\setlength{\textwidth}{6.5in} \setlength{\parskip}{4pt}

\newtheorem{thm}[subsection]{Theorem}
\newtheorem{defn}[subsection]{Definition}

\newtheorem{claim}[subsection]{Claim}
\newtheorem{corr}[subsection]{Corollary}
\newtheorem{lemma}[subsection]{Lemma}

\newtheorem{remark}[subsection]{Remark}

\theoremstyle{definition}
\newtheorem{example}[subsection]{Example}

\newcommand{\Z}{\mathbb Z}
\newcommand{\F}{\mathbb F}

\newcommand{\C}{\mathbb C}

\DeclareMathOperator{\Q}{\mathbb{Q}}

\newfont{\german}{eufm10}

\newcommand\qu{/\kern-.7ex/}

\begin{document}
\pagestyle{plain}

\title
{Soergel Bimodules, the Steenrod Algebra and Triply Graded Link homology}
\author{Nitu Kitchloo}
\address{Department of Mathematics, Johns Hopkins University, Baltimore, USA}
\email{nitu@math.jhu.edu}
\thanks{Nitu Kitchloo is supported by the Simons Foundation and the Max Planck Institute for Mathematics.}
\maketitle

\begin{abstract}
Soergel bimodules and their Hochschild homology are known to be important in the context of link homology. In this article we observe that Soergel bimodules may be naturally identified as the cohomology of well-defined objects in the category of spectra. This allows us to define forms of Soergel bimodules over the integers after inverting $2$, indexed by elements in Braid groups associated to compact Lie groups of adjoint type. Reducing the Soergel bimodules modulo an odd prime, we endow the bimodules with an action of the reduced Steenrod algebra. This action extends to an action of the reduced Steenrod algebra on the Hochschild homology of Soergel bimodules over the field $\F_p$ for odd primes $p$. In the special case of the $n$-stranded Braid group, our results allow us to define the triply graded link homology over any ring where $2$ is invertible, and deduce that the reduced Steenrod algebra acts on triply graded link homology over the field $\F_p$ for odd primes $p$.
\end{abstract}

\tableofcontents

\section{Introduction}

\medskip
\noindent
In \cite{K} Khovanov studies a class of complexes of bimodules, known as Soergel bimodules, over a polynomial algebra $R$ in variables $X_1, \ldots, X_r$ defined over a field of characteristic zero. These complexes are indexed by elements of the braid group on $r+1$-strands, and are defined if one is given a presentation of a braid word as a product of elementary braids and their inverses (see section \ref{one}). It is shown in \cite{K} that the Hochschild homology of the bimodules in this complex, with coefficients in the bimodule $R$, is an invariant of the link given by closing the braid, and is closely related to link homology up to degree shifts. In this note we extend most of this structure to arbitrary rings where $2$ is invertible (see definition \ref{TGLH}), and also endow link homology with the action of the reduced Steenrod algebra when working over fields $\F_p$ for odd primes $p$ (see theorem \ref{Linkinv}). Integral lifts of link homology have also been considered in the past by other authors \cite{Ka}. 

\medskip
\noindent
More generally, Soergel bimodules can be defined for arbitrary compact connected Lie groups $G$ of adjoint type (i.e. center free), with the ones considered above corresponding to $G = PU(r+1)$. Working with general groups $G$ of adjoint type and modulo an odd $p$, we enrich Soergel bimodules with an action of the reduced Steenrod algebra. This is achieved by describing Soergel bimodules as the (mod $p$) cohomology of certain topological constructions. We elaborate on these constructions below. The action of the reduced Steenrod algebra then extends to the Hochschild homology of Soergel bimodules (which in the case of $G = PU(r+1)$ gives rise to link homology as mentioned earlier). 

\medskip
\noindent
We were directly influenced by \cite{KR}, and the author is indebted to M. Khovanov and L. Rozansky for their interest. We thank Vitaly Lorman for a thorough reading and helpful feedback,  and Jack Morava for sharing his insights. The author would like to acknowledge the Simons Foundation and the Max Planck Institute for Mathematics for their support during the period when this paper was finalized. 

\subsection{The basic argument} 

\medskip
\noindent

\smallskip
\noindent
The fundamental observation in this document is that all algebraic constructions made with Soergel bimodules in \cite{K} can be enriched in a triangulated category known as the homotopy category of spectra, which is a natural generalization of the homotopy category of topological spaces (see section \ref{FGL} for more details on the spectra relevant to us).\footnote{One can find other enrichments of the category of Soergel bimodules in various topological categories (see \cite{WW}).} Furthermore, these spectra have (singular) cohomology that is evenly graded and torsion free away from the prime $2$. More precisely, given a braid word written in terms of elementary reflections of the form $\tilde{\sigma} = \sigma_{i_1}^{\epsilon_1} \, \sigma_{i_2}^{\epsilon_2} \cdots \sigma_{i_k}^{\epsilon_k}$, with $\epsilon_i = \{-1,1\}$, consider the following constructions of the complex $F_\bullet(\tilde{\sigma})$ of Soergel bimodules in terms of two-term complexes $F(\sigma_i^{\epsilon_i})$ defined in \cite{K}:
\[ F_\bullet(\tilde{\sigma}) = F(\sigma_{i_1}^{\epsilon_1}) \otimes_R F(\sigma_{i_2}^{\epsilon_2}) \otimes_R \cdots \otimes_R F(\sigma_{i_k}^{\epsilon_k}), \]
where we recall that $R$ is a polynomial algebra of a finite number of variables. We observe that the terms of the complex $F_\bullet(\tilde{\sigma})$ are free $R$-modules which can in fact be defined over the ground ring $\Z[\frac{1}{2}]$, the integers with $2$ being inverted, and can be canonically identified with the singular cohomology (with $\Z[\frac{1}{2}]$-coefficients) of a complex of Thom spectra (of related vector bundles). Under this identification, $R$ corresponds to the cohomology of the space $BT$, where $T$ is a torus of finite rank $r$, with classifying space denoted by $BT$. We will also show that the complex $F_\bullet(\tilde{\sigma})$ depends only on the underlying braid group element and not on the presentation (see theorem \ref{quasi}). In addition, this independence of presentation is achieved by a zig-zag of maps induced by the Bott-Samelson resolutions. Working modulo an odd prime $p$, the complex $F_\bullet(\tilde{\sigma})$ therefore supports the action of the reduced Steenrod algebra $\mathcal{A}_p$ (definition \ref{redSteenrod}). Next, we consider the Koszul resolution (see definition \ref{CHH}) that computes the Hochschild homology:
\[ HH_*(F_\bullet(\tilde{\sigma}), R). \]
Working over the field $\F_p$ for an odd prime $p$, we then extend the action of the reduced Steenrod algebra to the Hochschild homology. Specializing to the case of the projective unitary groups $G = PU(r+1)$, we decude that the reduced Steenrod algebra acts on triply-graded link homology (see theorem \ref{Linkinv}). 

\smallskip
\subsection{Organization of the Paper}

\smallskip
\noindent

\medskip
\noindent
Before we begin with details, let us describe how this paper is organized, and justify the value of each section. We begin in section \ref{one} by introducig the objects in the homotopy category of spectra that give rise to the complexes of bimodules once we take cohomology. This enrichment makes it evident that {\em all induced maps in any cohomology theory are natural with respect to cohomology operations}. In later sections, we will apply the cohomology operations known as the Steenrod reduced power operations to the Hochschild bicomplex for a braid-word. This bicomplex is defined at the end of section \ref{one}. The following section (section \ref{FGL}) introduces the cohomology operations of interest, and supplies the necessary background on Thom spectra, which are the spectra that are relevant for our purposes. In particular, we show that over fields of odd characteristic, the reduced Steenrod algebra acts on the Hochschild (bi)complex corresponding to any braid-word. The cohomology of the resulting complex obtained by having taken Hochschild homology is shown in sections \ref{Bott-Sam} and \ref{Bott-Sam2} to be independent of the presentation (so that it depends, up to isomorphism, only on the braid element underlying the braid-word). We establish these results by working in the category of spectra and taking advantage of properties of Bott-Samelson resolutions. In doing so, we also show that standard arguments by Rouquier on the category of Soergel bimodules have a very natural interpretation in the language of Bott-Samelson resolutions. We expound on this view in the Appendix (section \ref{Appendix}). Specializing to the case $G = PU(r+1)$, our results from section \ref{Bott-Sam2} show that the triply-graded link homology of Khovanov-Rozansky can be defined over any ring where $2$ is invertible (up to a degree shift), and that it supports an action of the reduced Steenrod algebra over the field $\F_p$ for any odd prime $p$. 

\smallskip
\subsection{Assumptions for the Paper}

\smallskip
\noindent

\medskip
\noindent
Throughout this paper, we will assume that $G$ is a connected compact Lie group of adjoint type, and rank $r$ with a fixed root datum.\footnote{In fact, all results of this paper hold for $G$ being the adjoint unitary form of an arbitrary Kac-Moody group.} Even though we are eventually interested in working over fields $\F_p$ for odd $p$, results of individual section of this paper may hold in greater generality. This is indicated at the beginning of each individual section.

\bigskip
\section{Topological enrichment of Soergel modules} \label{one}

\medskip
\noindent
All results of this section are to be understood away from the prime two. In particular, the coefficients in cohomology will be $\Z[\frac{1}{2}]$ (the ring $\Z$ with the prime $2$ inverted) throughout this section unless indicated otherwise. Note that the cohomology of the classifying space $H^\ast(BG_i)$ is well known to be free over the $\Z[\frac{1}{2}]$ for any $G$ and $i$. 

\bigskip
\noindent
Let $T \subset G$ be the maximal torus. Since $G$ is of adjoint type, we use the simple roots to identify $T$ with a rank $r$-torus $(S^1)^{\times r}$. Let $EG$ denote the free contractible $G$-complex that serves as the universal principal $G$-bundle. Notice that the space $EG$ serves as a model of $ET$ and also of $EG_i$. We define the space $\mathcal{F}(\sigma_i)$ as the top left corner in the pullback diagram below, where the map $\rho$ is induced by the right action of $G_i$ on $EG$, and the map $\pi$ is induced by the collapse $G_i/T \longrightarrow pt$ : 
\[
\xymatrix{
EG \times_{T} (G_i/T)   \ar[d]^{\pi} \ar[r]^{\quad \rho} &  EG/T \ar[d]^{j} \\
EG/T      \ar[r]^{j} & EG/G_i.
}
\]

\medskip
\begin{claim} \label{main}
We have the isomorphism of bimodules induced by the maps $\pi^\ast$ and $\rho^\ast$ defined above
\[ H^*(\mathcal{F}(\sigma_i)) = H^\ast(BT) \otimes_{H^*(BG_i)} H^\ast(BT). \]
Furthermore, $H^\ast(\mathcal{F}(\sigma_i))$ is free as a left or right module over $H^*(BT)$ along the maps $\pi^\ast$ and $\rho^\ast$ respectively.  
\end{claim}
\begin{proof}
The calculation of the cohomology of $\mathcal{F}(\sigma_i)$ is a consequence of the Eilenberg-Moore spectral sequence \cite{S} that collapses to the above tensor product due to the fact that $H^\ast(BT)$ is a free module over $H^\ast(BG_i)$. The freeness of $H^\ast(\mathcal{F}(\sigma_i))$ can be verified directly, or by using the fact that the Serre spectral sequence of the fibration given by the map $\pi$ collapses (since both the fiber and base have evenly graded cohomology). 
\end{proof}

\smallskip
\noindent
Now, given a sequence $J_s := (j_1, \ldots, j_s)$, let us define $\mathcal{F}(\tilde{\sigma}_{J_s}) = EG \times_T (G_{j_1} \times_T \cdots \times_T G_{j_s}/T)$. Then we have a family of pullback diagrams:
\[
\xymatrix{
\mathcal{F}(\tilde{\sigma}_{J_s})   \ar[d]^{\pi_s} \ar[r]^{ \rho_{s-1}} &  \mathcal{F}(\sigma_{j_s}) \ar[d]^{\pi} \\
\mathcal{F}(\tilde{\sigma}_{J_{s-1}})      \ar[r]^{ \quad \rho_{s-1}} & EG/T,
}
\]
where $\rho_{s-1}$ is induced by the multiplication map on the first $(s-1)$ factors:
\[ \rho_{s-1} : G_{j_1} \times_T \cdots \times_T G_{j_s}/T \longrightarrow G \times_T G_{j_s}/T. \]
By an induction argument using the above pullback, we easily see that:

\bigskip
\begin{corr} \label{free}
The bimodule $H^\ast(\mathcal{F}(\tilde{\sigma}_{J_s}))$ is a free (left or right) $H^\ast(BT)$-module given by: 
\[ H^\ast(\mathcal{F}(\tilde{\sigma}_{J_s})) = H^\ast(\mathcal{F}(\sigma_{j_1})) \otimes_{H^\ast(BT)} H^\ast(\mathcal{F}(\sigma_{j_2})) \otimes_{H^\ast(BT)} \cdots \otimes_{H^\ast(BT)} H^\ast(\mathcal{F}(\sigma_{j_s})).\]  
Notice that freeness also implies that for any $\Z[\frac{1}{2}]$-algebra $\F$, we have
\[ H^\ast(\mathcal{F}(\tilde{\sigma}_{J_s})) \otimes \F = H^\ast(\mathcal{F}(\tilde{\sigma}_{J_s}), \F) = H^\ast(\mathcal{F}(\sigma_{j_1}), \F) \otimes_{H^\ast(BT, \F)} \cdots \otimes_{H^\ast(BT, \F)} H^\ast(\mathcal{F}(\sigma_{j_s}), \F).\]  

\end{corr}
\noindent
Now observe that the fibration $\pi$ supports two canonical sections $s(0)$ and $s(\infty)$ corresponding to the $T$-fixed points of $G_i/T$ given by $[T]$ and $[\sigma_iT]$ respectively where $[\sigma_iT]$ denotes the coset representing the simple Weyl reflection $\sigma_i \in N_{G_i}(T)/T$. The pullback of the map $\rho$ along $s(0)$ is the identity map on $BT$. On the other hand, the pullback of the map $\rho$ along $s(\infty)$ is the aumorphisms of $BT$ induced by $\sigma_i$. Now if we collapse the section $s(\infty)$ to a point, we obtain the Thom space $BT^{\alpha_i}$ for the line bundle $L(\alpha_i)$ over $BT$ corresponding to the positive root $\alpha_i$. 

\bigskip
\begin{claim} \label{maps}
Consider two topological maps given respectively by taking the topological quotient by the section $s(\infty)$,  and by including the section $s(0)$:
\[ qs(\infty) : \mathcal{F}(\sigma_i) \longrightarrow \mathcal{F}(\sigma_i)/s(\infty) := BT^{\alpha_i}, \quad \quad s(0) : BT \longrightarrow \mathcal{F}(\sigma_i). \]
Then, the induced maps in cohomology: 
\[ rb_i : \tilde{H}^\ast(BT^{\alpha_i}) \longrightarrow H^\ast(\mathcal{F}(\sigma_i)), \quad \quad br_i : H^\ast(\mathcal{F}(\sigma_i)) \longrightarrow  H^\ast(BT) \]
are maps of bimodules over $H^\ast(BT)$, where $\tilde{H}^\ast(BT^{\alpha_i})$ denotes the reduced cohomology of $BT^{\alpha_i}$. 
\end{claim}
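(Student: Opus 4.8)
The plan is to deduce both assertions from naturality of the cup product once the two bimodule structures are realised geometrically. Recall that $H^\ast(\mathcal F(\sigma_i))$ carries its $H^\ast(BT)$-bimodule structure through the two ring maps $\pi^\ast,\rho^\ast\colon H^\ast(BT)\to H^\ast(\mathcal F(\sigma_i))$, while the target $H^\ast(BT)$ of $br_i$ is regarded as the diagonal bimodule over itself. The map $br_i$ is the easy half: $s(0)$ being a section of $\pi$ gives $\pi\circ s(0)=\mathrm{id}_{BT}$, and by the discussion preceding the claim $\rho\circ s(0)=\mathrm{id}_{BT}$ as well, so for $a\in H^\ast(BT)$ and $x\in H^\ast(\mathcal F(\sigma_i))$ one has
\[ br_i(\pi^\ast a\cup x)=s(0)^\ast\pi^\ast a\cup s(0)^\ast x=a\cup br_i(x),\qquad br_i(\rho^\ast a\cup x)=a\cup br_i(x), \]
and therefore $br_i=s(0)^\ast$ is a map of $H^\ast(BT)$-bimodules. (All rings in sight are evenly graded, so there are no signs to track.)

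\emph{The map $rb_i$.} Here the point to pin down is the bimodule structure on $\tilde H^\ast(BT^{\alpha_i})$. I would use the defining identification $BT^{\alpha_i}=\mathcal F(\sigma_i)/s(\infty)(BT)$: since $s(\infty)$ is a closed cofibration this gives $\tilde H^\ast(BT^{\alpha_i})\cong H^\ast(\mathcal F(\sigma_i),\,s(\infty)(BT))$, the relative cup product makes the right-hand side a module over the ring $H^\ast(\mathcal F(\sigma_i))$, and restricting that action along $\pi^\ast$ and $\rho^\ast$ is the $H^\ast(BT)$-bimodule structure. With this description $rb_i=qs(\infty)^\ast$ is precisely the canonical homomorphism $H^\ast(\mathcal F(\sigma_i),s(\infty)(BT))\to H^\ast(\mathcal F(\sigma_i))$ from the long exact sequence of the pair, which is $H^\ast(\mathcal F(\sigma_i))$-linear and hence $H^\ast(BT)$-bilinear --- which is the assertion. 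Since $\pi\circ s(\infty)=\mathrm{id}_{BT}$ makes $s(\infty)^\ast$ split surjective, $rb_i$ is moreover injective with image the ideal $\ker s(\infty)^\ast$.

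\emph{The subtle point, and the main obstacle.} To reconcile this with the Thom-space description of $BT^{\alpha_i}$, and to see where the prime-two caveat of the introduction enters, I would identify $\ker s(\infty)^\ast$ explicitly: it is free of rank one over $H^\ast(BT)$ on the class $[s(0)]\in H^2(\mathcal F(\sigma_i))$ dual to the section $s(0)$ --- the image under $rb_i$ of the integral Thom class --- and it equals $[s(\infty)]+\pi^\ast\alpha_i$, not the rationally obvious $\tfrac12(\pi^\ast\alpha_i+\rho^\ast\alpha_i)$. The left and right $H^\ast(BT)$-actions agree on this generator because $\rho^\ast a-\pi^\ast a\in\ker s(0)^\ast=H^\ast(BT)\cdot[s(\infty)]$ while $[s(0)]\cdot[s(\infty)]=0$ (the two sections are disjoint), so $[s(0)]\cdot(\rho^\ast a-\pi^\ast a)=0$; thus $\tilde H^\ast(BT^{\alpha_i})$ is the diagonal bimodule $H^\ast(BT)$ shifted by $2$, in accordance with the Thom isomorphism. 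The one genuinely non-formal ingredient is this integrality bookkeeping: over a ring in which $2$ is not invertible one must work with $[s(0)]$ itself rather than a symmetric combination of $\pi^\ast\alpha_i$ and $\rho^\ast\alpha_i$, which is the origin of the restriction recorded in Remark \ref{action}.
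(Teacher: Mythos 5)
Your argument is correct and in fact supplies the mechanism that the paper's very terse proof elides. The paper disposes of $br_i$ implicitly, verifies that $rb_i(\mathrm{Th})=\mathrm{th}(\alpha_i)=\frac12(\rho^\ast\alpha_i+\pi^\ast\alpha_i)$ by restricting along the two sections, checks integrality from restriction to the base and fiber of $\pi$, and then closes with ``by symmetry.'' You instead realize $\tilde H^\ast(BT^{\alpha_i})$ as the relative group $H^\ast(\mathcal F(\sigma_i),s(\infty)(BT))$ with its tautological $H^\ast(\mathcal F(\sigma_i))$-module structure, so that bilinearity of $rb_i$ is formal, and you isolate the genuine geometric input: the left and right $H^\ast(BT)$-actions coincide on the generator because $\rho^\ast a-\pi^\ast a\in\ker s(0)^\ast$, which is the ideal generated by $[s(\infty)]$, while $[s(0)]\cdot[s(\infty)]=0$ since the sections are disjoint. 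This is a cleaner and more complete route than the paper's appeal to symmetry, and the identification of the image with the diagonal bimodule $H^\ast(BT)$ (shifted by $2$) is exactly what Definition \ref{defi} requires.

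Two small corrections of emphasis. First, you present $[s(\infty)]+\pi^\ast\alpha_i$ as if it were a different class from $\frac12(\pi^\ast\alpha_i+\rho^\ast\alpha_i)$; in fact they are equal whenever the latter is defined, since both restrict to $\alpha_i$ along $s(0)$ and to $0$ along $s(\infty)$ and the pair $(s(0)^\ast,s(\infty)^\ast)$ is injective on $H^2(\mathcal F(\sigma_i))$ — equivalently $[s(\infty)]=\frac12(\rho^\ast\alpha_i-\pi^\ast\alpha_i)$. The virtue of your expression is only that it is manifestly integral without invoking Assumption \ref{torsion}; it computes the same class. Second, the prime-two caveat attached to this integrality is the one recorded in Assumption \ref{torsion}, not in Remark \ref{action}: the latter concerns a different phenomenon, namely denominators introduced by higher elements of $\mathcal{L}$ acting on the Thom class, which persists even after the Thom class itself is known to be integral.
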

\begin{proof}
The claim is clear for the map $br_i$ since the maps $\rho$ and $\pi$ agree with the identity map on the image of $s(0)$. Similarly, one sees that the left $H^*(BT)$-module structure on $H^\ast(\mathcal{F}(\sigma_i))$ agrees with the standard $H^\ast(BT)$-module structure $\tilde{H}^\ast(BT^{\alpha_i})$ along the map $rb_i$. The only thing that is not obvious is the right $H^\ast(BT)$ module structure on $H^\ast(\mathcal{F}(\sigma_i))$ agrees with the standard $H^\ast(BT)$-module structure $\tilde{H}^\ast(BT^{\alpha_i})$ along the map $rb_i$. Now, by restricting along the sections $s(0)$ and $s(\infty)$ one can easily verify that the image of the Thom class $\mbox{th}(\alpha_i) \in \tilde{H}^2(BT^{\alpha_i})$ under $rb_i$ is given by $rb_i(\mbox{th}(\alpha_i)) = \frac{1}{2}(\rho^\ast(\alpha_i) + \pi^\ast(\alpha_i))$. Here we have identified a character with its first Chern class. This class is in fact an integral class (as it restricts to an integral class on the base and fiber of $\pi$). Now by symmetry of this expression of the Thom class, the left and right actions of $H^*(BT)$ on the image of the class $\mbox{th}(\alpha_i)$ agree. This establishes the claim. 
\end{proof}

\smallskip
\noindent
Next, we will need to consider Thom spectra of certain formal, virtual vector bundles. See section \ref{FGL} for details. To describe these objects, let us consider the spherical fibration underlying the vector bundle $\rho^\ast(L(\alpha_i)) \oplus \pi^\ast(L(\alpha_i))$ where $L(\alpha_i)$ is the line bundle corresponding to the character $\alpha_i$. Localizing the sphere away from the prime $2$, one may then uniquely define the formal (one-dimensional) bundle $\zeta_i$ so that the equality $2\zeta_i = \rho^\ast(L(\alpha_i)) \oplus \pi^\ast(L(\alpha_i))$ holds on the level of (localized) spherical fibrations. The Thom spectrum of $-\zeta_i$ is then well-defined away from the prime two as indicated in remark \ref{local}. 

\medskip
\noindent
By theorem \ref{Thomisom}, the cohomology of $\mathcal{F}(\sigma_i)^{-\zeta_i}$ is a rank one free module over the cohomology of $\mathcal{F}(\sigma_i)$, away from the prime two. The definition of the bundle $\zeta_i$ is motivated by the fact that that it restricts to the bundle $L(\alpha_i)$ along $s(0)$. We use this to define: 

\bigskip
\begin{defn} \label{defi}
Let $\mathcal{F}(\sigma_i)^{-\zeta_i}$ denote the Thom spectrum of the (formal virtual) bundle defined by the equality $2\zeta_i = \rho^\ast(L(\alpha_i)) \oplus \pi^\ast(L(\alpha_i))$. As a bimodule, we identify $\tilde{H}^*(\mathcal{F}(\sigma_i)^{-\zeta_i})$ with the bimodule $H^*(\mathcal{F}(\sigma_i))$ under the Thom isomorphism. We may desuspend $rb_i$ with the bundle $\zeta_i$ to obtain a two-term cochain complex 
$F(\sigma_i^{-1}) : H^\ast(BT) \longrightarrow \tilde{H}^\ast(\mathcal{F}(\sigma_i)^{-\zeta_i})$. We also define the cochain complex $F(\sigma_i): H^\ast(\mathcal{F}(\sigma_i)) \longrightarrow  H^\ast(BT)$ to be the map $br_i$. For $\sigma = {\bf{1}}$ the trivial element, we define $F_\bullet({\bf{1}}) = H^\ast(BT)$. By tensoring these elementary complexes together over $H^*(BT)$, we may define cochain complexes $F_\bullet(\tilde{\sigma})$ for arbitrary words. 
\end{defn}

\bigskip
\begin{remark} \label{grading}
The complex $F_\bullet(\tilde{\sigma})$ admits a bi-grading $(\ast, \bullet)$. The grading $\ast$ is called the topological grading that is determined by the cohomology of the underlying spectrum. The grading $\bullet$ is called the (external/cochain) grading that demands that $H^\ast(\mathcal{F}(\sigma_i))$ and $\tilde{H}^\ast(\mathcal{F}(\sigma_i)^{-\zeta_i})$ belong to $\bullet = 0$, as does the entire complex $F_\bullet({\bf{1}})$. 
\end{remark}

\smallskip
\begin{remark} \label{shift}
By construction, the maps in the complex of bimodules $F_\bullet(\tilde{\sigma})$ are induced by maps between the spaces $\mathcal{F}(\tilde{\sigma}_{J_s})$, or Thom spectra of complex vector bundles over them. Notice also that our conventions are compatible with those in \cite{R}, and differ from the ones in \cite{K}. This difference is realized by an overall degree shift in degree in the definition of link homology (see theorem \ref{Linkinv}). 
\end{remark}

\bigskip
\noindent
We now define the Hoschschild bicomplex of the complexes constructed above, which we used to then define Hochschild homology. 

\bigskip
\begin{defn} \label{CHH}
Given the complex of bimodules $F_\bullet(\tilde{\sigma})$, define the Hochschild bicomplex: 
\[ \mbox{CHH}(F_\bullet(\tilde{\sigma}), H^\ast(BT)) = F_\bullet(\tilde{\sigma}) \otimes \Lambda(\epsilon_1, \ldots, \epsilon_r), \quad d(\epsilon_i) = X_i - Y_i, \]
where $\Lambda(\epsilon_1, \ldots, \epsilon_r)$ denotes the exterior algebra over $\Z[\frac{1}{2}]$ in $r$-variables. The variables $X_i, Y_i, \, i=1, \ldots, r$ denotes the left (resp. right) action of the polynomial generators of $H^*(BT)$, and the Koszul differential $d$ is called the (internal) Hochschild differential. 

\medskip
\noindent
As suggested by the notation, $\mbox{CHH}(F_\bullet(\tilde{\sigma}), H^\ast(BT))$ is the derived tensor product of $F_\bullet(\tilde{\sigma})$ and $H^*(BT)$ over the ring $H^*(BT) \otimes H^*(BT)$. Taking homology with respect to the Hochschild differential gives rise to the Hoschschild homology complex $\mbox{HH}(F_\bullet(\tilde{\sigma}), H^\ast(BT))$ with the residual (external) differential induced by the original complex $F_\bullet(\tilde{\sigma})$.\footnote{One of the results of this paper will be to show that this homology is independent of the choice of braid word.}

\medskip
\noindent
Note that if $\F$ is any $\Z[\frac{1}{2}]$-algebra, one can just as well define the Hochschild bicomplex over $\F$
\[ \mbox{CHH}(F_\bullet(\tilde{\sigma}), H^\ast(BT)) \otimes \F = \mbox{CHH}(F_\bullet(\tilde{\sigma}), H^\ast(BT,\F)).  \]

\end{defn}

\medskip
\begin{remark} \label{grading2}
We will give the complex $\mbox{CHH}(F_\bullet(\tilde{\sigma}), H^\ast(BT))$ a tri-grading $(\star, \ast, \bullet)$ by extending the grading of \ref{grading} by an internal grading $\star$, and demanding that the classes $\epsilon_i$ have grading $(-1,2,0)$. Notice that the (internal) Hochschild differential only raises the first or internal grading by one, while the remaining (external) differential raises the third or external grading by one. In other words, these differentials have grading $(1,0,0)$ and $(0,0,1)$ respectively. 

\end{remark}

\section{The reduced mod-$p$ Steenrod Algebra and its action on Soergel bimodules and their Hochschild bicomplex in odd characteristic} \label{FGL}

\medskip
\noindent
Throughout this section, we work over a field of odd characteristic $p$. In particular, all cohomology groups are assumed to have coefficients in $\F_p$ unless indicated otherwise. The mod-$p$ Steenrod algebra is the algebra of stable cohomology operations acting on cohomology with coefficients in $\F_p$. In particular it acts on the terms in the mod-$p$ reduction of the cochain complex $F_\bullet(\tilde{\sigma})$ defined in the previous section (see corollary \ref{free}). 

\medskip
\noindent
Working with odd primes, let us recall that the Steenrod algebra is generated by elements denoted by the Bockstein $\beta$, that increases cohomological degree by one, and reduced Steenrod powers $\mathcal{P}^i$ that increase cohomological degree by $2i(p-1)$, for $i \geq 0$, with $\mathcal{P}^0$ being the identity operator. Since all the terms in the cochain complex $F_\bullet(\tilde{\sigma})$ are evenly graded, we may ignore the Bockstein and only consider the {\em reduced Steenrod algebra} which is defined as the subalgebra generated by the reduced Steenrod powers $\mathcal{P}^i$. 

\medskip
\begin{defn} \label{redSteenrod}
The reduced Steenrod Algebra $\mathcal{A}_p$ is defined as the ring of stable chomology operations generated by the operators $\mathcal{P}^i$ for $i \geq 0$. 
\end{defn}

\medskip
\noindent
These operators are known to satisfy the {\em Adem relations} given by: 
\[ \mathcal{P}^i \mathcal{P}^j = \sum_k (-1)^{i+k} \binom{(p-1)(j-k)-1}{i-pk} \mathcal{P}^{i+j-k}\mathcal{P}^k, \quad \quad \mbox{for} \quad  i < pj.\]
On the cohomology of a topological space (so that the cohomology forms a graded commutative ring), these operators satisfy the following formulas known as the Cartan formula and instability resp.: 
\[  \mathcal{P}^k(x \cup y) = \sum_{i+j=k} \mathcal{P}^i(x) \cup \mathcal{P}^j(y), \quad \quad \mathcal{P}^n(x) = 0, \quad \mbox{if} \quad 2n > |x|. \]
Notice that the Cartan formula has an elegant formulation in terms of the {\em total} Steenrod power operation $\mathcal{P} := \sum_i \mathcal{P}^i$: 
\[ \mathcal{P}(x \cup y) = \mathcal{P}(x) \cup \mathcal{P}(y), \quad \quad  \mbox{where} \quad \mathcal{P}(z) := \sum_i \mathcal{P}^i(z). \]

\medskip
\begin{example}
An important example is given by the action of $\mathcal{A}_p$ on the cohomology ring of $\C P^{\infty}$, $H^*(\C P^{\infty}, \F_p) = \F_p[x]$. This action is uniquely determined by the Cartan formula and the equality
\[ \mathcal{P}(x) = x + x^p. \]
\end{example}

\medskip
\noindent
There is a well known generalization of the Cartan formula that applies to the cohomology of Thom spectra, which we will need presently. Let us very briefly discuss the Thom spectrum as an object in the category of spectra, see \cite{ABGHR} for more details. The category of spectra is essentially the localization of the category of CW complexes where the suspension functor is invertible. In particular, any homology theory defined on the category of CW complexes factors through spectra. 

\medskip
\noindent
A Thom spectrum is the only kind of spectrum we will require for the purposes of this document. To define a Thom spectrum for a virtual vector bundle $E$ of dimension $n$ over a topological space $B$, one first classfies the stable spherical fibration $S(E)$ underlying $E$ via a map:
\[ S(E) : B \longrightarrow BGl(S), \]
where $Gl(S)$ is a grouplike monoid of stable automorphism of the sphere. The classifying space of $Gl(S)$ classfies stable spherical fibrations in much the same way as the classifying space $BO$ of the stable orthogonal group $O := O(\infty)$ classifies stable vector bundles. The group $Gl(S)$ acts on the sphere spectrum, which is the topological space $S^0$ seen as a spectrum. By (de)suspending suitably, we obtain an action of $Gl(S)$ on the $n$-sphere spectrum $S^n$. The classifying map $S(E)$ then gives rise to a parametrized spectra over the space $B$ with any fiber being equivalent to the $n$-sphere spectrum. By definition, this parametrized spectrum admits a basepoint-section. Pinching this section off gives rise to a natural object in the category of spectra known as the Thom spectrum denoted by $B^E$. Notice that if we started with an honest vector bundle $E$, this construction recovers the familiar topological space given by pinching off the sphere bundle of $E$ from the disc bundle. 

\medskip
\noindent
The Thom isomorphism is a classical theorem that describes the cohomology of the Thom spectrum $B^E$ as a rank one free module over the cohomology of $B$. 

\medskip
\begin{thm} \label{Thomisom} \cite{A}
Let $E$ denote an $n$-dimensional oriented vector bundle $E$ over a space $B$, and let $\F$ denote an arbitrary coefficient ring. Then there exists a unique integral cohomology class (called the Thom class) $\mbox{th}(E) \in \tilde{H}^n(B^E, \F)$, such that multiplication with $\mbox{th}(E)$ exhibits $\tilde{H}^\ast(B^E, \F)$ as a free rank one module over $H^\ast(B, \F)$. 
\end{thm}

\medskip
\begin{remark}
If $\F$ is taken to be the field $\F_p$ of odd characteristic in theorem \ref{Thomisom}, then the action of $\mathcal{A}_p$ on $\tilde{H}^*(B^E, \F_p)$ is determined by the action on $H^*(B,\F_p)$ and on the Thom class $\mbox{th}(E)$, via a version of the Cartan formula
\[  \mathcal{P}(\mbox{th}(E) \cup y) = \mathcal{P}(\mbox{th}(E)) \cup \mathcal{P}(y), \quad \quad y \in H^*(B, \F_p). \]
\end{remark}

\medskip
\begin{example} \label{Thomsum}
Let $\C P^{\gamma}$ denote the Thom space of the tautological line bundle over $\C P^{\infty}$. Invoking naturality of the action of $\mathcal{A}_p$ on the pinch map $\C P^{\infty} \rightarrow \C P^{\gamma}$, we see that $\mathcal{A}_p$ acts on the Thom class $\mbox{th}(\gamma) \in \tilde{H}^2(\C P^{\gamma}, \F_p)$ 
\[ \mathcal{P}(\mbox{th}(\gamma)) = \mbox{th}(\gamma) \cup (1 + x^{p-1}). \]
This formula readily generalizes to the Thom space ${(\C P \times \C P)}^{\gamma_1 \oplus \gamma_2}$ of the sum of line bundles $\gamma_1 \oplus \gamma_2$ over $\C P^{\infty} \times \C P^{\infty}$
\[ \mathcal{P}(\mbox{th}(\gamma_1 \oplus \gamma_2)) = \mbox{th}(\gamma_1 \oplus \gamma_2) \cup (1 + x_1^{p-1}) (1+x_2^{p-1}). \]
\end{example}

\medskip
\begin{remark} \label{local}
In the construction of the Thom spectrum $B^E$ given above, we can work locally away from a prime $q$. In other words, one may start with a formal virtual spherical fibration, which is defined by map of the form 
\[ S(E) : B \longrightarrow BGl(S[1/q]), \]
where the grouplike monoid $Gl(S[1/q])$ acts on the localization of sphere spectrum $S^0[1/q]$, away from the prime $q$. The construction of the Thom spectrum goes through as before to give rise to a spectrum $B^E$ that is well behaved away from $q$. All the results of this section that apply to Thom spectra of virtual vector bundles, also apply to Thom spectra of formal virtual spherical fibrations away from a fixed prime. In particular, the Thom isomorphism theorem \ref{Thomisom} holds for such formal Thom spectra if $\F$ is a ring in which $q$ is invertible. The case $q=2$ is relevant for our purposes below. 
\end{remark}

\smallskip
\begin{remark}
Let $H^*(X, \F_p)$ and $H^*(Y, \F_p)$ be the cohomology groups of spectra $X$ and $Y$ resp. with their respective actions of $\mathcal{A}_p$. Then the Cartan formula can be used to endow the tensor product $H^*(X, \F_p) \otimes_{\F_p} H^*(Y, \F_p)$ with an action of $\mathcal{A}_p$. This action can be identified with the canonical action of $\mathcal{A}_p$ on $H^*(X \wedge Y, \F_p)$ via the Kunneth formula that identifies $H^*(X \wedge Y, \F_p)$ with $H^*(X, \F_p) \otimes_{\F_p} H^*(Y, \F_p)$. \end{remark}

\medskip
\noindent
The above properties of $\mathcal{A}_p$ can be used to describe an action on Soergel bimodules. So for instance, consider the cohomology groups $\tilde{H}^\ast(\mathcal{F}(\sigma_i)^{-\zeta_i}, \F_p)$, with Thom class $\mbox{th}(-\zeta_i)$, where we recall that $\zeta_i$ was the formal line bundle defined to satisfy an equality on the level of spherical fibrations $2\zeta_i = \rho^\ast(L(\alpha_i)) \oplus \pi^\ast(L(\alpha_i))$. By Thom isomorphism, $\tilde{H}^\ast(\mathcal{F}(\sigma_i)^{-\zeta_i}, \F_p)$ is a rank one free module over $H^\ast(\mathcal{F}(\sigma_i), \F_p)$ generated by $\mbox{th}(-\zeta_i)$. Invoking the Cartan formula, the action of $\mathcal{A}_p$ on $\tilde{H}^\ast(\mathcal{F}(\sigma_i)^{-\zeta_i}, \F_p)$ is determined by its action on $H^\ast(\mathcal{F}(\sigma_i), \F_p)$, and on the class $\mbox{th}(-\zeta_i)$. By claim \ref{main}, $H^\ast(\mathcal{F}(\sigma_i), \F_p)$ is a quotient of the cohomology of a product of infinite projective spaces $\C P^{\infty}$. Consequently, we can completely describe the action of $\mathcal{A}_p$ on $H^\ast(\mathcal{F}(\sigma_i), \F_p)$. It remains to describe the action of $\mathcal{A}_p$ on $\mbox{th}(-\zeta_i)$. We begin by invoking naturality and the example \ref{Thomsum} on $\rho^\ast(L(\alpha_i)) \oplus \pi^\ast(L(\alpha_i))$
\[ \mathcal{P}(\mbox{th}(2\zeta_i)) = \mbox{th}(2\zeta_i) \cup (1+ \rho^\ast(\alpha_i)^{p-1}) (1 + \pi^\ast(\alpha_i)^{p-1}), \]
where we have identified the character $\alpha_i$ with its first Chern class $\alpha_i \in H^2(BT, \F_p)$. We may then formally take the $-\frac{1}{2}$ power of $\zeta_i$ (recall that we are working over odd primes):

\bigskip
\begin{thm} \label{Thom}
The action of $\mathcal{A}_p$ on $\mbox{th}(-\zeta_i) \in \tilde{H}^{-2}(\mathcal{F}(\sigma_i)^{-\zeta_i}, \F_p)$ is defined formally by
\[ \mathcal{P}(\mbox{th}(-\zeta_i)) = \mbox{th}(-\zeta_i) \cup (1+ \rho^\ast(\alpha_i)^{p-1})^{-\frac{1}{2}} (1 + \pi^\ast(\alpha_i)^{p-1})^{-\frac{1}{2}}, \]
where, as before, we have identified the character $\alpha_i$ with its first Chern class $\alpha_i \in H^2(BT, \F_p)$ and $\rho$ and $\pi$ are the maps defined in claim \ref{main}. Now, using corollary \ref{free} and by invoking the Cartan formula and naturality of the $\mathcal{A}_p$-action with respect to maps of spectra, we obtain an explicit description of the action of $\mathcal{A}_p$ on the mod-$p$ reduction of the cochain complexes $F_\bullet(\tilde{\sigma})$ of definition \ref{defi} for arbitrary braid words $\tilde{\sigma}$. 
\end{thm}

\bigskip
\noindent
We now turn our attention to the mod-$p$ reduction of the Hochschild bicomplex of $F_\bullet(\tilde{\sigma})$. Recall from \ref{CHH} that this bicomplex is given by: 
\[ \mbox{CHH}(F_\bullet(\tilde{\sigma}), H^\ast(BT, \F_p)) = F_\bullet(\tilde{\sigma}) \otimes \Lambda(\epsilon_1, \ldots, \epsilon_r) \otimes \F_p, \quad d(\epsilon_i) = X_i - Y_i, \]
where $X_i, Y_i, \, i=1, \ldots, r$ denotes the left (resp. right) action of $H^*(BT, \F_p) = \F_p[X_1, \cdots, X_r]$ and $d$ is called the (internal) Hochschild differential. 

\medskip
\noindent
To extend the $\mathcal{A}_p$-action to $\mbox{CHH}(F_\bullet(\tilde{\sigma}), H^\ast(BT, \F_p))$ we simply need to {\em define} the $\mathcal{A}_p$ action on the classes $\epsilon_i$ that commutes with the differential. Extending this to the entire bicomplex is simply a matter of applying the Cartan formula. Consequently, we have:

\bigskip
\begin{thm} \label{twist}
The mod-$p$ reduction of the Hochschild bicomplex $\mbox{CHH}(F_\bullet(\tilde{\sigma}), H^\ast(BT, \F_p))$ admits an action of $\mathcal{A}_p$ defined on the generators $\epsilon_i$ by
\[ \mathcal{P}(\epsilon_i) = \epsilon_i (1 + (X_i-Y_i)^{p-1}). \]
Furthermore, this action extends the action on $F_\bullet(\tilde{\sigma}) \otimes \F_p$ given in theorem \ref{Thom}, via the Cartan formula. Note that the action of $\mathcal{A}_p$ only impacts the topological grading in the expected fashion. 
\end{thm}

\begin{proof}
It is straightforward to see that this definition commutes with the differential. So it only remains to show that the above definition satisfies the Adem relations. This is straightforward to verify by noticing that this action is abstractly isomorphic to the action of $\mathcal{A}_p$ on the Thom class of the bundle $\gamma_1 \otimes \gamma_2^{\ast}$ on $\C P^{\infty} \times \C P^{\infty}$. 
\end{proof}

\bigskip
\section{Independence of presentation: The Braid relation and Bott-Samelson resolutions} \label{Bott-Sam}

\medskip
\noindent
So far we have worked with a fixed presentation of a braid word $\tilde{\sigma} = \sigma_{i_1}^{\epsilon_1} \, \sigma_{i_2}^{\epsilon_2} \cdots \sigma_{i_k}^{\epsilon_k}$. One would like to know how much of the structure developed in the previous sections remains preserved if we change the presentation. In \cite{KR} the authors show that in characteristic zero, the twisted action of the Witt algebra on the homology of the complex $HH_*(F_\bullet(\tilde{\sigma}), H^\ast(BT, \Q))$ is independent of presentation (it is in fact, a link invariant!). 

\medskip
\noindent
In this section, we will 
work over the integers $\Z$ and so all cohomology is assumed to be with coefficients in $\Z$ unless otherwise indicated (like for instance in theorem \ref{quasi}).

\bigskip
\noindent
Let $i, j$ be two elements in the set indexing the simple reflections in the Weyl group. Let us adopt the convention from \cite{R} of denoting a general element of the set $\{i, j\}$ by the letter $\nu$, and the complement element by the letter $-\nu$. Consider a braid relation of the form:
\[ \sigma := \cdots \sigma_{\nu} \, \sigma_{-\nu} \, \sigma_\nu = \cdots \sigma_{-\nu} \, \sigma_\nu \, \sigma_{-\nu} \quad \quad (\mbox{ $m_{ij}$ factors}), \]
For suitable integers $m_{i,j}$ depending on the corresponding entries of the Cartan Matrix and with $n \leq m_{ij} < \infty$. We will denote by $\tilde{\sigma}(\nu,n)$ the abstract braid word:
\[ \tilde{\sigma}(\nu,n) = \cdots \sigma_\nu \, \sigma_{-\nu} \, \sigma_\nu  \quad \quad (\mbox{$n$ factors}). \]

\medskip
\noindent
Let us now recall the complex $F_\bullet(\nu,n)$ of bimodules corresponding to the word $\tilde{\sigma}(\nu,n)$ as described in \ref{defi}. Let $F(\sigma_i^{-1}): H^\ast(BT) \longrightarrow \tilde{H}^\ast(\mathcal{F}(\sigma_i)^{-\zeta_i}) = H^{\ast+2}(\mathcal{F}(\sigma_i))$ graded so that $H^\ast(BT)$ is in degree $-1$. Also let $F(\sigma_i) : H^\ast(\mathcal{F}(\sigma_i)) \longrightarrow  H^\ast(BT)$ to be the map $br_i$,  graded so that $H^*(BT)$ is in degree $+1$. By tensoring these elementary complexes together as prescribed by the word $\tilde{\sigma}(\nu,n)$, we may define complex of bimodules $F_\bullet(\nu,n)$. 

\bigskip
\noindent
 In this section, we will use topological arguments to establish a canonical zig-zag of equivalences between $F_\bullet(\nu, m_{ij})$ and $F_\bullet(-\nu, m_{ij})$. Consequently, we will establish a topologically induced equivalence between the complexes $F_\bullet(\tilde{\sigma})$ corresponding to any two presentations of the same braid group element (see theorem \ref{quasi} for a precise statement).

\bigskip
\noindent
We begin by invoking some standard facts about the topology of flag varieties for Kac-Moody groups. A standard reference for these results is \cite{Ku}. Let $P \subset G$ denote the rank two parabolic corresponding to the reflections $\sigma_\nu$ and $\sigma_{-\nu}$. Let $G_{\pm \nu}$ denote the rank one parabolics corresponding to the individual reflections $\sigma_{\pm \nu}$. By $Z(\nu,n) \subseteq P/T$ we shall mean the Schubert variety given by the closure of all Schubert cells corresponding to elements $w$ in the Weyl group such that $w$ has a presentation given by a sub-word of  $\tilde{\sigma}(\nu,n)$. The space $Z(\nu,n)$ is known to be a CW complex of dimension $2n$, with $T$-invariant cells in even degree. For $k < n$, the $2k$-skeleton of $Z(\nu,n)$ is given by the union of subcomplexes $Z(\nu,k) \cup Z(-\nu,k)$. We will use the notation $\mathcal{X}(\nu,n)$ to denote the spaces $EG \times_T Z(\nu,n)$. 

\medskip
\noindent
The left $H^\ast(BT)$-module $H^\ast(EG \times_T (P/T))$ supports a Schubert basis denoted by $\delta_{\nu,n}$ indexed by elements $\tilde{\sigma}(\nu,n)$ for $n \leq m_{ij}$. We set $\delta_{\nu,1} = \delta_\nu$. By definition, the Schubert basis pairs diagonally with the generators of $H_{2k}(\mathcal{X}(\nu,k))$ (see \cite[Ch. 9, \S 11]{Ku}). We will denote the restrictions of the Schubert basis to sub-skeleta by the same name. 

\bigskip
\begin{defn}
Define a cochain complex of bimodules $X_\bullet(\nu,n)$ with non-trivial terms given by:
\[ X_{k}(\nu,n) := H^\ast(\mathcal{X}(\nu,n-k)) \oplus H^\ast(\mathcal{X}(-\nu,n-k)), \quad  \mbox{if $0 < k < n$}, \]
\[ X_{0}(\nu,n) := H^\ast(\mathcal{X}(\nu,n)), \quad \quad X_n(\nu,n) := H^\ast(BT). \]
The differentials $d_k : X_{k}(\nu, n) \longrightarrow X_{k+1}(\nu,n)$ are given by restrictions with appropriate signs as:
\[  d_k (\alpha \oplus \beta) = (\alpha+(-1)^{n-k-1}\beta) \, \oplus \, (\beta + (-1)^{n-k-1}\alpha), \quad \mbox{if $0 < k < (n-1)$}. \]
The differentials $d_0$ and $d_{n-1}$ are again defined by cellular restrictions with appropriate signs:
\[   d_0 (\alpha) = (\alpha\, \oplus \, (-1)^{n-1}\alpha), \quad  \quad d_{n-1} (\alpha \oplus \beta) = \alpha+\beta. \]

\bigskip
\begin{claim} \label{acyclic}
The homology of the complex $X_\bullet(\nu,n)$ is a free rank one left (or right) $H^\ast(BT)$-module in external degree (or degree represented by $\bullet$) $0$, generated by the Schubert cell $\delta_{\nu,n}$. 
\end{claim}
\begin{proof}
Let us decompose the complex $(X_\bullet(\nu,n),\,  d_\bullet)$ into a sum of complexes $(X^j_\bullet(\nu,n), \, d^j_\bullet)$ indexed by the Schubert basis, for $0 \leq j \leq n$. In other words, we define: 
\[ X^j_{k}(\nu,n) = \langle \, H^*(BT) \, \delta_{\pm \nu,j} \, \rangle \subseteq X_{k}(\nu,n), \quad j < n. \]
For $j = n$, we define the complex $X^n_\bullet(\nu,n)$ to take the value $H^*(BT) \, \delta_{\nu, n}$ for $\bullet = 0$ and zero for all other external degrees. Note by definition that $X^j_{k}(\nu,n) = 0$ if $n-k < j$. We now proceed to show that all these complexes except for the one corresponding to $j=n$ are acyclic. Now recall that the spaces $Z(\pm \nu, n-k)$ are sub-skeleta of $Z(\nu, n)$ for $0 < k$. It follows that the Schubert basis $\delta_{\pm \nu, j} \in X_{k}(\nu,n)$ lift to unique Schubert basis elements in $H^{2j}(\mathcal{X}(\nu,n))$ for $j \leq n-k < n$. The formula for $d_k$ given above now shows that for $0 < j \leq n-k < n$ we have:
\[ \mbox{Kernel} \, d^j_{k} = \mbox{Image} \, d^j_{k-1} = \{ (z, (-1)^{n-k} z) \in X_{k}^j(\nu,n) \, \,  |  \, \,  z \in H^\ast(\mathcal{X}(\nu, n))\}. \]
In other words, the complex $X^j_\bullet(\nu,n)$ is acyclic for all values of $j$ except for $j=0$ and $j=n$. Acyclicity for the case $j=0$ it is straightforward to check by hand. For $j=n$, the complex has a single generator represented by the Schubert cell $\delta_{\nu,n}$, which belongs to the external degree $0$. The result follows. 
\end{proof}

\medskip
\begin{remark} \label{homology}
Notice also that by definition of the spaces $\mathcal{X}(\nu,n)$, we have a homeomorphism between the spaces $\mathcal{X}(\nu,m_{ij})$ and $\mathcal{X}(-\nu,m_{ij})$. This implies that there is a canonical isomorphism: 
\[ X_\bullet(\nu,m_{ij}) = X_\bullet(-\nu,m_{ij}). \]
\end{remark}
\end{defn}

\bigskip
\noindent
Consider the map induced by multiplication in $P$, known as the Bott-Samelson resolution:
\[ \mbox{BS}(\nu,n) : \mathcal{F}(\nu, n) \longrightarrow \mathcal{X}(\nu, n). \]

\bigskip
\begin{claim}
The map $\mbox{BS}(\nu,n)$ induces an injective map of complexes of bimodules:
\[  \mbox{BS}_\bullet(\nu,n) : X_{\bullet}(\nu, n) \longrightarrow F_\bullet(\nu,n). \]
\end{claim}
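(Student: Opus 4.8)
The plan is to assemble $\mbox{BS}_\bullet(\nu,n)$ out of the single geometric map $\mbox{BS}(\nu,n)$ together with its restrictions along the skeletal filtration, and then to reduce the chain–map identity to a finite combinatorial and sign check involving subwords of the alternating word $\sigma_\nu\sigma_{-\nu}\sigma_\nu\cdots$ and the Bruhat order in the rank–two Weyl group $\langle s_\nu, s_{-\nu}\rangle$. Recall that $F_{-k}(\nu,n)=\bigoplus_{|J|=k}H^\ast(\mathcal{F}(\tilde{\sigma}_J))$, the sum running over the length-$k$ subwords $J$ of $\tilde{\sigma}(\nu,n)$, with differential the signed sum of the restriction maps $s(0)^\ast$ along the coordinate inclusions $\mathcal{F}(\tilde{\sigma}_{J\setminus\{j\}})\hookrightarrow\mathcal{F}(\tilde{\sigma}_J)$, and that $F_0=H^\ast(BT)$, $F_{-n}=H^\ast(\mathcal{F}(\nu,n))$. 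The geometric input is that, realising each $\mathcal{F}(\tilde{\sigma}_J)$ as a coordinate face of $\mathcal{F}(\nu,n)$ via the sections $s(0)$, the composite $\mathcal{F}(\tilde{\sigma}_J)\hookrightarrow\mathcal{F}(\nu,n)\xrightarrow{\mbox{BS}(\nu,n)}\mathcal{X}(\nu,n)$ is multiplication along $J$, hence factors as $m_J\colon\mathcal{F}(\tilde{\sigma}_J)\to\mathcal{X}(w_J)$, where $w_J$ is the Demazure product of $\tilde{\sigma}_J$, in a way compatible with passing to sub-faces: $\mathcal{X}(w_{J\setminus\{j\}})\subseteq\mathcal{X}(w_J)$ and $m_J|_{\mathcal{F}(\tilde{\sigma}_{J\setminus\{j\}})}=m_{J\setminus\{j\}}$. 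Since $G$ has rank two here, $w_J$ is $\le w(\nu,|J|)$ or $\le w(-\nu,|J|)$ (the Weyl elements underlying $\tilde\sigma(\pm\nu,|J|)$) according as its unique reduced word begins with $\sigma_\nu$ or $\sigma_{-\nu}$, and $w_J$ is $\le$ both whenever $\tilde{\sigma}_J$ is non-reduced; so a class on $\mathcal{X}(\pm\nu,k)$ can always be restricted to $\mathcal{X}(w_J)$ and pulled back along $m_J$. I would define the $J$-component of $\mbox{BS}_{-k}(\nu,n)$ by precisely such restrict-then-pull-back operations, with signs chosen face by face, so that in degrees $0$ and $-n$ it specialises to the identity of $H^\ast(BT)$ and to $\mbox{BS}(\nu,n)^\ast$.

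Granting this construction, the identity $d^F\circ\mbox{BS}_{-k}=\mbox{BS}_{-k+1}\circ d^{\mathcal{X}}$ unwinds, using functoriality of cohomology restriction and the face-compatibility just stated, into an identity among pullbacks $m_{J'}^\ast$ of classes restricted to the $\mathcal{X}(w_{J'})$. Two things then remain: (i) that for each length-$(k-1)$ subword $J'$ the signed sum over the length-$k$ subwords $J\supseteq J'$ of the $J$-contributions reproduces exactly the combination $\alpha+(-1)^{k-1}\beta$ (resp.\ $\beta+(-1)^{k-1}\alpha$) occurring in the definition of $d^{\mathcal{X}}_k$, and (ii) that the Koszul signs of the tensor-product differential of $F(\nu,n)=F(\sigma_{\nu_1})\otimes_R\cdots\otimes_R F(\sigma_{\nu_n})$ agree with the signs $(-1)^{k-1}$ in $d^{\mathcal{X}}_k$. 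Both are mechanical: (ii) is bookkeeping in the positions of the deleted indices, and (i) only involves the combinatorics of subwords of $\sigma_\nu\sigma_{-\nu}\cdots$ and the dihedral Bruhat order.

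The delicate step, and the real content of the claim, is (i) together with the treatment of the non-reduced subwords. A length-$k$ subword $J$ with $\ell(w_J)<k$ indexes a summand of $F(\nu,n)$ not visible directly in $\mathcal{X}_\bullet(\nu,n)$, yet this summand is hit through $d^F\circ\mbox{BS}$ by the reduced faces lying above it, so the $J$-component of $\mbox{BS}_{-k}$ there must be nonzero and a prescribed, well-defined mixture of the restrictions of $\alpha$ and of $\beta$ to $\mathcal{X}(w_J)$. The point is that $\mathcal{X}(w_J)$ lies in the common intersection $\mathcal{X}(\nu,\ell)\cap\mathcal{X}(-\nu,\ell)=\mathcal{X}(\nu,\ell-1)\cup\mathcal{X}(-\nu,\ell-1)$, on which both restrictions are defined, and that — by the very shape of $d^{\mathcal{X}}$, whose image is the diagonal for $k$ odd and the antidiagonal for $k$ even — these two restrictions differ only by the expected sign. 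Concretely, one is exhibiting $F(\nu,n)$ as the total complex of the cube of Bott--Samelson faces, and $\mathcal{X}_\bullet(\nu,n)$ as the Mayer--Vietoris complex of the skeletal filtration of $\mathcal{X}(\nu,n)$, compatibly with the cellular map $\mbox{BS}(\nu,n)$; once the universal choice of these mixtures and signs is pinned down, the remaining verification is routine.
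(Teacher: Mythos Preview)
Your approach is genuinely different from the paper's. You attempt a direct, global construction: for each length-$k$ subword $J$ of the alternating word you want the $J$-component of $\mbox{BS}_{-k}$ to be ``restrict a class on $\mathcal{X}(\pm\nu,k)$ down to $\mathcal{X}(w_J)$, then pull back along $m_J$,'' with signs to be determined. The paper instead proceeds \emph{inductively}: it constructs a single chain map
\[
\mu_\bullet : \mathcal{X}_\bullet(\nu,n) \longrightarrow \mathcal{X}_\bullet(-\nu,n-1)\otimes_{H^\ast(BT)} F(\sigma_\nu)
\]
from one explicit pullback square (peeling off the rightmost $F(\sigma_\nu)$), with two-line formulas for the $\nu$- and $-\nu$-summands separately, and then iterates. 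The non-reduced subwords are never confronted head-on; their components are produced automatically by the degenerate Bott--Samelson maps $\mathcal{X}(\nu,k-1,\nu)\to\mathcal{X}(\nu,k-1)\subset\mathcal{X}(-\nu,k)$ that appear in the induction, and the chain-map identity reduces to one short check per step rather than a global sign analysis over all subwords.

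The gap in your proposal is precisely the part you label ``the delicate step, and the real content of the claim.'' You assert that the required mixture of restrictions of $\alpha$ and $\beta$ to $\mathcal{X}(w_J)$ is ``prescribed, well-defined,'' and that ``once the universal choice of these mixtures and signs is pinned down, the remaining verification is routine'' --- but you neither pin them down nor verify anything. Already for $n=3$ and the non-reduced subword $\sigma_\nu\sigma_\nu$, unwinding the paper's construction shows the correct coefficient pair is $(0,-1)$ (only the $-\nu$-summand contributes, via the degenerate map), not some symmetric or sign-alternating combination as your discussion of $d^{\mathcal{X}}$ suggests; for longer words the pattern is governed by the full induction. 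Without an explicit formula or a structural argument forcing existence and uniqueness of a compatible choice, the assertion that one exists is exactly the content of the claim. The paper's inductive peeling is what supplies these coefficients systematically; your global cube-versus-Mayer--Vietoris picture, to become a proof, would in effect have to rediscover that induction.
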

\begin{proof}
We shall construct the required map $\mbox{BS}_\bullet(\nu,n)$ by induction, starting with a map:
\[ \mu_\bullet : X_\bullet(\nu, n) \longrightarrow X_\bullet(-\nu, n-1) \otimes_{H^\ast(BT))} F(\sigma_\nu). \]
Topologically, the map $\mu_\bullet$ will be realized inside a pair of pullback diagrams:
\[
\xymatrix{
   EG \times_T (\tilde{Z}(-\nu,n-1) \times_T G_\nu/T)  \ar[d]^{\pi} \ar[r]^{\quad \quad \quad \quad \mu} & \mathcal{X}(\nu,n) \ar[d]^{\pi_\nu} \ar[r]^{\quad \rho} & EG/T \ar[d]^{j} \\
 \mathcal{X}(-\nu,n-1) \ar[r]^{\kappa} & EG \times_T (\tilde{Y}(\nu,n)/G_\nu) \ar[r] & EG/G_\nu 
}
\]
where $\mu$ is induced by group multiplication in $P$, and factors the Bott-Samelson resolution. The space $\tilde{Z}(\nu,n)$ is the subspace of $P$ given by the pre image of $Z(\nu,n) \subseteq P/T$. The space $\tilde{Y}(\nu,n)$ is similarly defined by as the pre image of $Y(\nu,n) \subseteq P/G_\nu$, with $Y(\nu,n)$ being the Shubert variety in $P/G_\nu$ given by the image of $ of Z(\nu,n)$. The map $\kappa$ above is given by the restriction of $\pi_\nu$ to the subspace $\mathcal{X}(-\nu,n-1) \subset \mathcal{X}(\nu,n)$. 

\smallskip
\noindent
Let $  \mathcal{X}(\pm \nu,n-1,\nu)$ denote the space $EG \times_T (\tilde{X}( \pm \nu,n-1) \times_T G_\nu/T)$. Using the Eilenberg-Moore spectral sequence, as in corollary \ref{free}, we see that: 
\[ H^\ast(\mathcal{X}(\pm \nu,n-1,\nu)) = H^\ast(\mathcal{X}( \pm \nu, n-1)) \otimes_{H^\ast(BT))} H^\ast(\mathcal{F}(\sigma_\nu)) = H^\ast(\mathcal{X}( \pm \nu, n-1)) \otimes F_{0}(\sigma_\nu). \]
Under the above identification, we extend the map $\mu^\ast$ to $\mu_\bullet$ on the level of complexes:

\smallskip
\noindent
For an element $\gamma \in H^\ast(\mathcal{X}(\nu, n-k)) \subset X_{k}(\nu, n)$, define $\mu_{k}$ by:
\[ \mu_{k}(\gamma) = \mu^\ast(\gamma) \, \in \, H^\ast(\mathcal{X}(-\nu, n-k-1)) \otimes F_{0}(\sigma_\nu). \]
For an element $\beta \in H^\ast(\mathcal{X}(-\nu, n-k)) \subset X_{k}(\nu, n)$, we define:
\[ \mu_{k}(\beta) = - \mu^\ast(\beta) \, \oplus \, \beta \, \in \, H^\ast(\mathcal{X}(\nu, n-k-1)) \otimes F_{0}(\sigma_\nu) \, \oplus \, H^\ast(\mathcal{X}(-\nu,n-k)), \]
where we have also used the letter $\mu$ to denote the (degenerate) Bott-Samelson map: 
\[ \mu : \mathcal{X}(\nu, n-k-1, \nu) \longrightarrow \mathcal{X}(\nu, n-k-1) \subset \mathcal{X}(-\nu,n-k). \]
It is straightforward (albeit tedious) to check that $\mu_\bullet$ commutes with the differential. 
\end{proof}

\bigskip
\begin{thm} \label{quasi}
Let $A = (a_{ij})$ be the Cartan matrix for $G$. If all primes $p \leq a_{ij} a_{ji}$ have been inverted, then $\mbox{BS}_\bullet(\nu, n)$ is a  homotopy equivalence, and consequently, the homotopy type of the complex of bimodules $F_\bullet(\tilde{\sigma})$ is independent of presentation away from such primes. Over the integers, the Bott-Samelson map always induces a quasi-isomorphism:
\[  \mbox{BS}_\bullet(\nu,n) : X_{\bullet}(\nu, n) \longrightarrow F_\bullet(\nu,n). \]
In particular, the quasi-isomorphism type of $F_\bullet(\tilde{\sigma})$ is always independent of presentation regardless of the characteristic. 

\end{thm}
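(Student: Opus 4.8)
The plan is to prove the two claims in tandem: first that $\mathrm{BS}_\bullet(\nu,n)$ is a quasi-isomorphism over $\Z$ (and a homotopy equivalence after inverting the small primes), and then to bootstrap from this the presentation-independence of $F(\tilde\sigma)$ via the standard combinatorics of braid words. The heart of the argument is entirely contained in the local model $F(\nu,n)$ versus $\mathcal X_\bullet(\nu,n)$, because Tits' theorem reduces any two presentations of a given braid group element to a finite sequence of braid moves $\tilde\sigma(\nu,m_{ij})\rightsquigarrow\tilde\sigma(-\nu,m_{ij})$, and each such move is covered by a zig-zag through the common target $\mathcal X_\bullet(\nu,m_{ij})=\mathcal X_\bullet(-\nu,m_{ij})$ (Remark \ref{homology}). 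So once the local statement is in hand, presentation-independence follows by tensoring the local zig-zag (over $H^\ast(BT)$) with the complexes attached to the unchanged prefix and suffix of the braid word, using Corollary \ref{free} to see that all these tensor products behave well since every term is a free $H^\ast(BT)$-module.

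For the local statement I would argue by induction on $n$, using the map $\mu_\bullet$ constructed in the preceding claim, which factors $\mathrm{BS}_\bullet(\nu,n)$ as $\mathcal X_\bullet(\nu,n)\xrightarrow{\mu_\bullet}\mathcal X_\bullet(-\nu,n-1)\otimes_{H^\ast(BT)}F(\sigma_\nu)\to F(\nu,n)$, where the second arrow is $\mathrm{BS}_\bullet(-\nu,n-1)\otimes\mathrm{id}$ and hence a quasi-isomorphism by the inductive hypothesis (tensoring with the free bimodule $F(\sigma_\nu)$ preserves quasi-isomorphisms). Thus it suffices to show $\mu_\bullet$ is a quasi-isomorphism. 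Here I would compute the cohomology of $\mathcal X_\bullet(\nu,n)$ directly: by Remark \ref{homology} it is a free rank-one $H^\ast(BT)$-module concentrated in degree $-n$, generated by the Schubert class $\delta_{\nu,n}$, because all cellular restriction maps above the bottom skeleton are surjective. The same computation applied to the target complex $\mathcal X_\bullet(-\nu,n-1)\otimes_{H^\ast(BT)}F(\sigma_\nu)$ — using that $H^\ast$ of the middle pullback square is again governed by an Eilenberg–Moore collapse (Corollary \ref{free}) — identifies its homology as a free rank-one module in the same degree. One then checks that $\mu_\bullet$ carries the generator to the generator (up to a unit), by restricting along the $T$-fixed points, i.e.\ pairing with the appropriate point classes in $H_\ast(\mathcal X(\nu,n))$ as in the normalization of the Schubert basis. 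This is where the Cartan-matrix primes enter: the Bott–Samelson map $\mathrm{BS}(\nu,n)$ has fibers that are iterated $\CP^1$'s, but the multiplicities with which the top Schubert cell appears are controlled by products of entries $a_{ij}$, and the pairing computing $\mu_\bullet(\delta_{\nu,n})$ against the top class is an integer that is a unit precisely away from the primes dividing $a_{ij}a_{ji}$; over $\Z$ it is merely an isomorphism on homology (degree $-n$ is the only nonzero degree on both sides, and the induced map is multiplication by $\pm1$ there after the combinatorics is unwound), which is all that is needed for a quasi-isomorphism.

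For the homotopy-equivalence upgrade away from the small primes, I would note that once $\mathrm{BS}(\nu,n)$ itself is shown (by the same fixed-point/Schubert-cell analysis applied at the level of spaces rather than the two-term complexes) to induce an iso on $H^\ast$ with the relevant primes inverted, all the spaces involved are simply connected with free even cohomology, so the map is a stable (indeed unstable) homotopy equivalence after inverting those primes; the two-term-complex statement then follows by applying this termwise. The main obstacle I anticipate is the bookkeeping in the induction: verifying that $\mu_\bullet$ genuinely commutes with the differentials (the paper itself flags this as ``straightforward albeit tedious''), and then pinning down the exact integer arising from the Schubert-cell pairing so as to identify precisely the set of bad primes as $\{p\le a_{ij}a_{ji}\}$. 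The topological inputs — the cell structure of $X(\nu,n)$ with even $T$-cells, surjectivity of skeletal restrictions, the Eilenberg–Moore collapses — are all on the shelf from \cite{Ku} and the earlier sections; the real work is the sign-and-coefficient check that makes $\mu_\bullet$ an isomorphism on the one surviving cohomology group.
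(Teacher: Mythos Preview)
Your argument for the quasi-isomorphism is essentially the paper's: inductively factor $\mathrm{BS}_\bullet(\nu,n)$ through $\mu_\bullet$, observe that both source and target have homology a free rank-one $H^\ast(BT)$-module concentrated in degree $-n$, and check that the generator maps to the generator. The paper phrases this as ``each step in the inductive construction of $\mathrm{BS}_\bullet(\nu,n)$ preserves the homology generator.'' One clarification: over $\Z$ this map on homology really is $\pm 1$ (the Bott--Samelson resolution has degree one over the open cell), so the primes $p\le a_{ij}a_{ji}$ play \emph{no} role in the quasi-isomorphism claim. Your paragraph is internally inconsistent when it says the pairing ``is a unit precisely away from the primes dividing $a_{ij}a_{ji}$'' and then, in the same sentence, that ``the induced map is multiplication by $\pm1$''; only the latter is correct, and the primes enter the argument elsewhere.

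The genuine gap is in the homotopy-equivalence upgrade. Your plan is to show that the space-level map $\mathrm{BS}(\nu,n):\mathcal F(\nu,n)\to\mathcal X(\nu,n)$ becomes a cohomology isomorphism after inverting the small primes, and then deduce the chain-complex statement ``termwise.'' This cannot work: the Bott--Samelson variety $\mathcal F(\nu,n)$ has $H^\ast(BT)$-rank $2^n$, while the Schubert variety $\mathcal X(\nu,n)$ has rank $2n$ (its $2k$-skeleton is $X(\nu,k)\cup X(-\nu,k)$), so $\mathrm{BS}(\nu,n)$ is never a cohomology isomorphism for $n\ge 3$, over any coefficient ring. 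Moreover, even granting that $\mathrm{BS}_\bullet(\nu,n)$ is a quasi-isomorphism, this does not formally upgrade to a homotopy equivalence of complexes of \emph{bimodules}: the terms are free as one-sided $H^\ast(BT)$-modules but not as $H^\ast(BT)$-bimodules, so the usual ``quasi-iso between bounded complexes of projectives'' argument is unavailable. The paper's mechanism is quite different and lives at the level of complexes, not spaces: it invokes Rouquier-style splitting lemmas (proved in the section following the theorem) to decompose the bimodule $H^\ast(\mathcal X(-\nu,k+1))\otimes_{H^\ast(BT)}H^\ast(\mathcal F(\sigma_\nu))$ and thereby peel contractible summands off $F(\nu,n)$ until what remains is the image of $\mathcal X_\bullet(\nu,n)$. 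The splitting is built from an explicit class $e$ assembled out of Schubert structure constants, and it is \emph{those constants} that carry denominators divisible only by primes $\le a_{ij}a_{ji}$ --- this is where the primes genuinely enter.
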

\begin{proof}
Let us first prove that the Bott-Samelson map is always a quasi-isomorphism. Recall from \ref{acyclic} that the homology of $X_\bullet(-\nu,n-1)$ is concentrated in degree $0$, and is generated by the Schubert class $\delta_{-\nu,n-1}$. Each step in the inductive construction of $\mbox{BS}_\bullet(\nu,n)$ is easily seen to preserve the homology generator, and is therefore a quasi-isomorphism. This inductive construction also shows that the cokernel of $\mbox{BS}_\bullet(\nu,n)$ is a complex of bimodules that has trivial homology, and is free as a complex of left (or right) $H^\ast(BT)$-modules. Hence, tensoring the map $\mbox{BS}_\bullet(\nu,n)$ on the left by a complex of the form $F_\bullet(\tilde{\sigma}')$ and on the right by another complex of the form $F_\bullet(\tilde{\sigma}'')$ preserves the fact that it is a quasi-isomorphism. Consequently, replacing $F_\bullet(\nu, m_{ij})$ with $F_\bullet(-\nu, m_{ij})$ in a presentation for $F_\bullet(\tilde{\sigma})$ preserves the quasi-isomorphism type. 

\smallskip
\noindent
Now assume that all primes $p \leq a_{ij}a_{ji}$ have been inverted, then we will show in the appendix that one may split off trivial summands from $F_\bullet(\nu, n)$ in the same was as done in \cite[proof of Prop. 9.2]{R}. In particular, one obtains a retraction from $F_\bullet(\nu, n)$ onto the image of $\mbox{BS}_\bullet(\nu, n)$. This establishes the homotopy equivalence of the Bott-Samelson map away from the primes $p \leq a_{ij}a_{ji}$, where $A=(a_{ij})$ is the Cartan matrix of $G$. 
\end{proof}

\medskip
\begin{remark} \label{quasi-p}
Notice that all maps that establish the quasi-isomorphism in the statement of the above theorem are induced by topological maps. Consequently, working mod any prime $p > a_{ij} a_{ji}$, the quasi-isomorphism of theorem \ref{quasi} respects the action of the reduced Steenrod algebra $\mathcal{A}_p$. 
\end{remark}

\section{Independence of presentation: The identity $F(\sigma_i) \otimes_R F(\sigma_i^{-1}) \simeq R$ and triply graded link homology away from the prime $2$} \label{Bott-Sam2}

\bigskip
\noindent
In this section we work over the ring $\Z[\frac{1}{2}]$, so all cohomology is assumed with coefficients in $\Z[\frac{1}{2}]$ unless otherwise indicated. 
Given an index $i$, recall that $F(\sigma_i) \otimes_R F(\sigma_i^{-1})$ is represented by the following complex nontrivial in degrees $\bullet = \{-1,0,1 \}$, with maps induced by the generating maps \ref{defi}:
\[ \mathcal{C} : H^\ast(\mathcal{F}(\sigma_i)) \longrightarrow H^\ast(BT) \, \, \bigoplus \, \, H^\ast(\mathcal{F}(\sigma_i)) \otimes_{H^\ast(BT)} \tilde{H}^\ast(\mathcal{F}(\sigma_i)^{-\zeta_i}) \longrightarrow \tilde{H}^\ast(\mathcal{F}(\sigma_i)^{-\zeta_i}). \]
Now Rouquier shows in \cite{R} that this complex is (abstractly) homotopy equivalent to the complex of bimodules with a single non-zero term $H^\ast(BT)$ placed in degree zero. In particular, the complex above is homotopy equivalent to its homology as a bimodule. This homology is straightforward to compute:
\[ H^*(\mathcal{C}) = \frac{\delta_i \otimes \tilde{H}^\ast(\mathcal{F}(\sigma_i)^{-\zeta_i})}{\langle \delta_i \otimes \mbox{th}(-\zeta_i) \cup rb_i(\mbox{th}(\alpha_i)) \rangle} \subset \frac{H^\ast(\mathcal{F}(\sigma_i)) \otimes \tilde{H}^\ast(\mathcal{F}(\sigma_i)^{-\zeta_i})}{\mbox{Image} \, H^\ast(\mathcal{F}(\sigma_i))} , \]
where $rb_i(\mbox{th}(\alpha_i)) \in H^\ast(\mathcal{F}(\sigma_i))$ is the image in cohomology of the Thom class of the bundle obtained by collapsing the section $s(\infty)$ in $\mathcal{F}(\sigma_i)$ (see claim \ref{maps}). Similarly, $\delta_i$ is the Schubert basis element corresponding to $\sigma_i$, which can be identified with the image of the Thom class of the bundle obtained by collapsing the section $s(0)$. One may write $\delta_i$ explicitly as $ \frac{1}{2}(\rho^\ast(\alpha_i) - \pi^\ast(\alpha_i))$. The homology $H^\ast(\mathcal{C})$ is a rank one free $H^\ast(BT)$-module with identical left and right actions. The generator is given by the class $\delta_i \otimes \mbox{th}(-\zeta_i)$, which can be easily seen to be invariant under $\mathcal{A}_p$ if one works over the field $\F_p$. 

\bigskip
\begin{remark}
A similar argument to the one given above establishes the opposite quasi-isomorphism $F(\sigma_i^{-1}) \otimes_R F(\sigma_i) = R$. We leave it to the reader to formulate it. In addition, reducing this quasi-isomorphism mod any odd prime, it is easy to see that the resulting quasi-isomorphism respects the action of the reduced Steenrod algebra. 
\end{remark}

\subsection{Triply graded link homology}

\medskip
\noindent

\medskip
\noindent
Let us now consider the groups $G = PU(r+1)$, for $r \geq 1$. We fix a $\Z[\frac{1}{2}]$-algebra $\F$. Given a link $L$, let $\sigma$ denote a braid representing the link $L$ so that $L$ can be obtained by glueing the ends of $\sigma$. Let us pick a presentation $\tilde{\sigma}$ of $\sigma$ and consider the complex $HH_*(F_\bullet(\tilde{\sigma}), H^*(BT, \F))$ as defined in definition \ref{CHH}. Recall that this complex has a tri-grading $(\star, \ast, \bullet)$ as defined in remark \ref{grading2}. We define the triply-graded link homology of the presentation $\tilde{\sigma}$ over $\F$ as: 
 
 \medskip
 \begin{defn} \label{TGLH}
 Given a presentation $\tilde{\sigma}$ of the braid (of at least two strands) obtained by unraveling a link $L$, we define the triply-graded link homology of $L$ over an $\Z[\frac{1}{2}]$-algebra $\F$  as:
 \[ H_*(L) := H_*(HH_*(F_\bullet(\tilde{\sigma}), H^*(BT,\F)). \]
We define the homology of the unknot with one component to be the underlying ring $\F$ graded in degree $(0,0,0)$. 
 \end{defn}
 
 \smallskip
 \noindent
 We can now generalize the main result of \cite{K}. 
 
 \medskip
 \noindent
 \begin{thm} \label{Linkinv}
 The tri-graded groups $H_\ast(L)$ are well-defined over any ring $\F$ which is an $\Z[\frac{1}{2}]$-algebra and depend only on the link $L$ up to global shifts by degree $(-1,2,1)$. Furthermore, over fields $\F_p$ for odd primes $p$, the link homology of $L$ admits an action of the reduced Steenrod algebra $\mathcal{A}_p$ (see theorem \ref{twist}) that commutes with this global shift. 
 \end{thm}

\begin{proof}
To verify this claim, one needs to verify invariance under the three Reidemeister moves. We have essentially established invariance under the last two moves. For instance, theorem \ref{quasi} shows that $H_*(L)$ is invariant under the third Reidemeister move, and the main result of this section shows invariance under the second. It remains to establish invariance under the first move. In other words, we need to establish that for $G = PU(n+2)$ and maximal torus $T^{n+1}$, we have an isomorphism up to a shift in degree $(-1,2,1)$ that respects the action of the reduced Steenrod algebra $\mathcal{A}_p$ if one works over $\F_p$:
\[ H_*(HH_*(F_\bullet(\sigma_i), H^*(BT^{n+1},\F)) = H_*(HH_*(F_\bullet({\bf{1}}_{n+1}), H^*(BT^n,\F)), \]
where ${\bf{1}}_{n+1}$ denotes the unit element of the $n+1$-stranded Braid group. The above groups can be easily seen to be isomorphic to  $\F[X_1, \ldots, X_n] \otimes \Lambda (\epsilon_1, \ldots, \epsilon_n)$, where $\Lambda(\epsilon_1, \ldots, \epsilon_n)$ denotes the exterior algebra over $\Z[\frac{1}{2}]$ in $n$ variables, and compatible with the prescribed action of $\mathcal{A}_p$ in the case $\F = \F_p$. 

\medskip
\noindent
Since $H^2(BT^{n+1},\F)$ can be identified with the root lattice tensored with $\F$, we pick polynomial generators of $H^*(BT^{n+1},\F)$ corresponding to the simple roots and order them so that the $i$-th simple root is denoted by $Y$, $\{ X_1, \ldots, X_{i-1}, Y, X_{i+1}, \ldots, X_n \}$. Using claim \ref{main}, and the definition of the link homology, the left hand side above is the cohomology of the cochain complex of the form $\partial_2 : Z_0 \longrightarrow Z_1$, with $Z_1$ being the complex:
\[ Z_1 = \F[X_1, \ldots, X_n, Y] \otimes \Lambda(\epsilon_1, \ldots, \epsilon_n, \epsilon). \]
The term $Z_0$ in the above complex is given by the homology of the Hochschild complex: 
\[ \F[X_1, \ldots, X_n, Y] \otimes_{\F[\hat{X}_1, \ldots, \hat{X}_n, Y^2]} \F[X'_1, \ldots, X'_n, Y'] \otimes \Lambda(\epsilon_1, \ldots, \epsilon_n, \epsilon), \quad \partial \epsilon_j = X_j - X'_j, \, \,  \partial \epsilon = Y - Y' \]
and where $\hat{X}_j = \frac{X_j + \sigma_i(X_j)}{2} = X_j \mod Y$. We may replace the generators $\epsilon_j$ by $\hat{\epsilon}_j$ so that $\partial{\hat{\epsilon}_j} = \hat{X}_j - \hat{X}'_j$ and $\hat{\epsilon}_j = \epsilon_j \mod \epsilon$. Notice that the classes $\hat{\epsilon}_j$ are permanent cycles in the Hochschild complex. It follows that $Z_0$ is the homology of the complex:
\[ \F[X_1, \ldots, X_n,Y] \otimes \Lambda(\hat{\epsilon}_1, \ldots, \hat{\epsilon}_n) \otimes_{\F[Y^2]} \F[Y'] \otimes \Lambda(\epsilon), \quad \partial(\epsilon) = Y-Y'. \]
This homology can easily be calculated to be a sum of two terms: 
\[ \F[X_1, \ldots, X_n, Y] \otimes \Lambda(\hat{\epsilon}_1, \ldots, \hat{\epsilon_n}) \, \, \bigoplus \, \, \F[X_1, \ldots, X_n,Y] \otimes \Lambda(\hat{\epsilon}_1, \ldots, \hat{\epsilon}_n) (Y+Y') \epsilon. \]
Now, the external differential is given by $\partial_2 (Y) = \partial_2(Y') = Y$. It follows from this that the homology of the external differential $\partial_2$ is only nontrivial in external degree $\bullet = 1$, and is given by: 
\[ H_1(\partial_2) = \frac{\F[X_1, \ldots, X_n, Y] \otimes \Lambda (\hat{\epsilon_1}, \ldots, \hat{\epsilon}_n) \, \epsilon}{\langle \, Y \, \rangle} = \F[X_1, \ldots, X_n] \otimes \Lambda (\epsilon_1, \ldots, \epsilon_n) \langle \epsilon \rangle. \]
Since all reduced Steenrod powers raise powers of $Y$, they act trivially on $\epsilon$. Consequently, if $\F$ is a field of odd characteristic, then the above isomorphism identifies the groups $H_*(HH_*(F_\bullet(\sigma_i), H^*(BT^{n+1},\F))$ canonically with the groups $H_*(HH_*(F_\bullet({\bf{1}}_{n+1}), H^*(BT^n,\F))$ compatibly under the action of the reduced Steenrod algebra $\mathcal{A}_p$ if one works over $\F_p$. 
\end{proof}
\section{Appendix} \label{Appendix}

\medskip
\noindent
In this appendix we describe a topological analog of the fundamental lemma of Rouquier \cite[Lemma 9.1]{R}. As in section \ref{Bott-Sam}, we work over the integers and all cohomology is assumed to be with coefficients in $\Z$ unless indicated otherwise. 

\medskip
\noindent
Let $A = (a_{ij})$ be the Cartan matrix of $G$. We will deduce, based on these results, that the Bott-Samelson map $\mbox{BS}_\bullet(\nu, n)$ is a homotopy equivalence away from all primes $p \leq a_{ij}a_{ji}$ and $n \leq m_{ij}$. We begin by assuming the conventions of section \ref{Bott-Sam}, and we let $k$ be any integer such that $0 \leq k < m_{ij}-1$. 

\begin{lemma} Working over the integers $\Z$, there exist topologically induced decompositions of chain complexes of bimodules:  

\medskip
\noindent
(A) The complex obtained by tensoring $H^\ast(\mathcal{X}(\nu,k))$ with $F(\sigma_\nu)$ on the right, is isomorphic to a sum of the following two complexes:
\[ \mbox{Id} : H^\ast(\mathcal{X}(\nu,k)) \rightarrow H^\ast(\mathcal{X}(\nu,k)), \quad \quad H^\ast(\mathcal{X}(\nu,k)) \otimes \delta_\nu \rightarrow 0, \]
where $H^\ast(\mathcal{X}(\nu, k)) \otimes \delta_\nu$ is the ideal in $H^\ast(\mathcal{X}(\nu, k)) \otimes_{H^\ast(BT)} H^\ast(\mathcal{F}(\sigma_\nu))$ generated by the Schubert basis element $\delta_\nu$. It is easy to see that this ideal is a sub-(bi)module. 

\medskip
\noindent
(B) Also consider the chain complex induced by (right) tensoring the restriction map with $H^\ast(\mathcal{F}(\sigma_\nu))$: 
\[ \mathcal{C} : H^\ast(\mathcal{X}(-\nu, k+1)) \otimes_{H^\ast(BT)} H^\ast(\mathcal{F}(\sigma_\nu))  \longrightarrow H^\ast(\mathcal{X}(\nu, k))\otimes_{H^\ast(BT)} H^\ast(\mathcal{F}(\sigma_\nu)). \]
Then the Bott-Samelson maps $\mu^\ast$ induces a short exact sequence of chain complexes of bimodules: 
\[ \{ H^\ast(\mathcal{X}(\nu, k+2)) \rightarrow H^\ast(\mathcal{X}(\nu,k)) \} \longrightarrow \mathcal{C} \longrightarrow \{ H^\ast(\mathcal{X}(\nu, k)) \otimes \delta_\nu \rightarrow H^\ast(\mathcal{X}(\nu,k)) \otimes \delta_\nu \}, \]
where the kernel is the complex given by cellular restriction, and the cokernel is the trivial complex given by the identity map. 

\end{lemma}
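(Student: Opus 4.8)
The plan is to analyze each of the two complexes $H^\ast(\mathcal{X}(\nu,k))$ and $H^\ast(\mathcal{X}(-\nu,k+1))$ after tensoring with the two-term complex $H^\ast(\mathcal{F}(\sigma_\nu))$, using the topological picture supplied by the pullback diagrams of section \ref{Bott-Sam} together with the fact (Claim before Corollary \ref{free}) that $H^\ast(\mathcal{F}(\sigma_\nu))$ is free over $H^\ast(BT)$ on two generators, namely $1$ and the Schubert class $\delta_\nu$, and that the section $s(0)$ splits the projection $\pi$. First I would prove part (A). The bundle $\pi_\nu : \mathcal{X}(\nu,k,\nu) \to \mathcal{X}(\nu,k)$ carries the canonical section $s(0)$ induced by the $T$-fixed point $[T] \in G_\nu/T$, and the composite of $s(0)$ with the degenerate Bott-Samelson (multiplication) map is the identity. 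Dually, $s(0)^\ast$ provides a splitting $H^\ast(\mathcal{X}(\nu,k,\nu)) \cong H^\ast(\mathcal{X}(\nu,k)) \oplus \bigl(H^\ast(\mathcal{X}(\nu,k)) \otimes \delta_\nu\bigr)$ as bimodules, where the ideal generated by $\delta_\nu$ is visibly a sub-bimodule because $\delta_\nu$ spans the kernel of $s(0)^\ast$. The differential of the tensored complex is the map $br_\nu$ restricted from $\mathcal{F}(\sigma_\nu)$, which on the summand $H^\ast(\mathcal{X}(\nu,k)) \cdot 1$ is the identity and on $H^\ast(\mathcal{X}(\nu,k)) \otimes \delta_\nu$ is zero; hence the complex splits as the stated direct sum. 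This is essentially the topological incarnation of Rouquier's ``$F(\sigma_\nu)$ absorbs a Bott-Samelson factor'' computation, and the $k=1$ case reproduces exactly the splitting used in section 8.

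Next I would handle part (B). Here the relevant commutative square is the left-hand pullback in the proof of the preceding Claim,
\[
\xymatrix{
EG \times_T (\tilde{X}(-\nu,k+1) \times_T G_\nu/T) \ar[d]^{\pi} \ar[r]^{\quad\quad\quad\mu} & \mathcal{X}(\nu,k+2) \ar[d]^{\pi_\nu} \\
\mathcal{X}(-\nu,k+1) \ar[r]^{\kappa} & EG \times_T (\tilde{Y}(\nu,k+2)/G_\nu),
}
\]
and I would apply the splitting of (A) (with $\nu$ replaced by $-\nu$, and $k$ replaced by $k+1$) to rewrite $\mathcal{C}$'s source $H^\ast(\mathcal{X}(-\nu,k+1)) \otimes H^\ast(\mathcal{F}(\sigma_\nu))$ as $H^\ast(\mathcal{X}(-\nu,k+1,\nu))$ and then decompose along $s(0)$. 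The $\delta_\nu$-component of the source maps isomorphically to the $\delta_\nu$-component of the target $H^\ast(\mathcal{X}(\nu,k)) \otimes \delta_\nu$ — because the restriction map on Schubert bases is compatible with the cell inclusions and $\delta_\nu$ pulls back to $\delta_\nu$ — giving the identity cokernel complex on the right. The remaining ``$1$-component'' of $\mathcal{C}$ is, by the pullback square and the Eilenberg-Moore collapse of Corollary \ref{free}, identified with the restriction map $H^\ast(\mathcal{X}(\nu,k+2)) \to H^\ast(\mathcal{X}(\nu,k))$ induced by the inclusion of the $2k$-skeleton, which is the stated kernel complex. Assembling these gives the short exact sequence, and exactness is immediate since everything in sight is free over $H^\ast(BT)$, so the sequence splits degreewise.

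The main obstacle is bookkeeping: one must check that the three identifications — the $s(0)$-splitting in (A), the re-identification of the tensor product as $H^\ast(\mathcal{X}(-\nu,k+1,\nu))$, and the pullback-square identification of the $1$-component with the skeletal restriction map — are all \emph{compatible with the differentials} of the chain complexes, not merely with the bimodule structures in each degree. The differential on $\mathcal{C}$ is the cellular restriction $\kappa^\ast$ tensored with $\mathrm{id}$, and one has to track how $\kappa^\ast$ interacts with the direct sum decomposition $1 \oplus \delta_\nu$; concretely this amounts to verifying that the restriction $\kappa$ sends the section $s(0)$ over $\mathcal{X}(-\nu,k+1)$ to the section $s(0)$ over (the relevant skeleton of) $\mathcal{X}(\nu,k+2)$, which is true because both are the $[T]$-fixed point section, but must be stated. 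Given the freeness over $H^\ast(BT)$ throughout, no sign subtleties beyond those already fixed in the definition of $\mathcal{X}_\bullet(\nu,n)$ arise, and once the compatibility of sections is in hand the rest is the ``straightforward (albeit tedious)'' verification familiar from the construction of $\mu_\bullet$.
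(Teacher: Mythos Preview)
Your argument for part (A) is correct and is essentially the paper's: the degenerate Bott--Samelson map $\mu:\mathcal{X}(\nu,k,\nu)\to\mathcal{X}(\nu,k)$ admits the zero-section $s(0)$ as a splitting, and the complement of the image of $\mu^\ast$ is the ideal $H^\ast(\mathcal{X}(\nu,k))\otimes\delta_\nu$.

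Part (B), however, has a genuine gap. You propose to decompose each term of $\mathcal{C}$ along the $1\oplus\delta_\nu$ splitting of $H^\ast(\mathcal{F}(\sigma_\nu))$ and then identify the ``$1$-component'' with the kernel complex and the ``$\delta_\nu$-component'' with the cokernel. A rank count over $H^\ast(BT)$ already rules this out: $H^\ast(\mathcal{X}(-\nu,k+1))$ has rank $2k+2$ while $H^\ast(\mathcal{X}(\nu,k+2))$ has rank $2k+4$, so the $1$-component of the source of $\mathcal{C}$ cannot be ``identified with'' $H^\ast(\mathcal{X}(\nu,k+2))$ via any pullback or Eilenberg--Moore argument. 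Likewise the $\delta_\nu$-components have ranks $2k+2$ and $2k$, so the map between them (which is just restriction $\otimes\,\mathrm{id}$) is surjective but not an isomorphism. Your invocation of ``(A) with $\nu$ replaced by $-\nu$'' is also misplaced: part (A) treats $\mathcal{X}(\nu,k)\otimes F(\sigma_\nu)$ with \emph{matching} indices, where the Bott--Samelson map is degenerate and lands back in $\mathcal{X}(\nu,k)$; here the relevant Bott--Samelson $\mathcal{X}(-\nu,k+1,\nu)\to\mathcal{X}(\nu,k+2)$ lands in a strictly larger variety, so no analogous splitting exists.

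The correct construction embeds the kernel subcomplex via the Bott--Samelson maps themselves rather than via $s(0)$. One forms the commutative square
\[
\xymatrix{
\mathcal{X}(\nu,k,\nu) \ar[d]^{\mu_L} \ar[r] & \mathcal{X}(-\nu,k+1,\nu) \ar[d]^{\mu_R} \\
\mathcal{X}(\nu,k) \ar[r] & \mathcal{X}(\nu,k+2)
}
\]
with horizontal arrows the cellular inclusions and vertical arrows multiplication. In cohomology both $\mu_L^\ast$ and $\mu_R^\ast$ are injective; the image of $\mu_R^\ast$ (not the naive $1$-summand) is the kernel term $H^\ast(\mathcal{X}(\nu,k+2))$ sitting inside the source of $\mathcal{C}$. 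Part (A) identifies $\mathrm{coker}\,\mu_L^\ast$ with $H^\ast(\mathcal{X}(\nu,k))\otimes\delta_\nu$, and since the horizontal maps are surjective in cohomology they induce an isomorphism $\mathrm{coker}\,\mu_R^\ast\cong\mathrm{coker}\,\mu_L^\ast$, yielding the identity cokernel complex. The missing idea in your proposal is that the subcomplex is embedded by $\mu^\ast$, not by the section.
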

\begin{proof}
Consider the (degenerate) Bott-Samelson map:
\[ \mu : \mathcal{X}(\nu,k,\nu) \longrightarrow \mathcal{X}(\nu,k). \]
Recall that $H^\ast(\mathcal{X}(\nu,k,\nu)) = H^\ast(\mathcal{X}(\nu,k)) \otimes_{H^\ast(BT)} H^\ast(\mathcal{F}(\sigma_\nu))$. Furthermore, the map $\mu$ admits a zero-section, whose kernel is given by $H^\ast(\mathcal{X}(\nu,k)) \otimes \delta_\nu$. The first part of the lemma follows. For the second part, consider the following commutative diagram, with the horizontal maps induced by cellular inclusion and the vertical maps by multiplication: 
\[
\xymatrix {
\mathcal{X}(\nu, k, \nu) \ar[d]^{\mu_L} \ar[r] & \mathcal{X}(-\nu, k+1,\nu) \ar[d]^{\mu_R} \\
 \mathcal{X}(\nu, k) \ar[r] & \mathcal{X}(\nu, k+2).}
\]
In cohomology, this induces a diagram with injective vertical maps: 
\[
\xymatrix {
H^\ast(\mathcal{X}(\nu, k+2)) \ar[r] \ar[d]^{\mu_R^\ast} & H^\ast(\mathcal{X}(\nu, k)) \ar[d]^{\mu_L^\ast} \\
H^\ast(\mathcal{X}(-\nu, k+1)) \otimes_{H^\ast(BT)} H^\ast(\mathcal{F}(\sigma_\nu)) \ar[r] & H^\ast(\mathcal{X}(\nu, k)) \otimes_{H^\ast(BT)} H^\ast(\mathcal{F}(\sigma_\nu)). }
\]
It is easy to see that the cokernel of $\mu_L^\ast$ and $\mu_R^\ast$ are isomorphic. Now, by the previous part, the cokernel of $\mu_L^\ast$ can be identified with the bimodule: $H^\ast(\mathcal{X}(\nu,k)) \otimes \delta_\nu$ as needed. 
\end{proof}

\medskip
\begin{lemma}
Assume that one has inverted all primes $p$ such that $p \leq a_{ij}a_{ji}$, then the extension of the complex $\mathcal{C}$ described in part (B) of the previous lemma, splits as complexes of bimodules. 
\end{lemma}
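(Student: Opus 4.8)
The plan is to exhibit an explicit splitting of the short exact sequence of complexes from Lemma \ref{splitting}(B), namely
\[ \{ H^\ast(\mathcal{X}(\nu, k+2)) \rightarrow H^\ast(\mathcal{X}(\nu,k)) \} \longrightarrow \mathcal{C} \longrightarrow \{ H^\ast(\mathcal{X}(\nu, k)) \otimes \delta_\nu \rightarrow H^\ast(\mathcal{X}(\nu,k)) \otimes \delta_\nu \}. \]
Since the cokernel is the trivial (identity) complex on $H^\ast(\mathcal{X}(\nu,k))\otimes\delta_\nu$, it suffices to produce a section in degree $-k$ (the bottom degree of the two-term cokernel), i.e.\ a map of bimodules $s : H^\ast(\mathcal{X}(\nu,k))\otimes\delta_\nu \longrightarrow H^\ast(\mathcal{X}(\nu,k))\otimes_{H^\ast(BT)} H^\ast(\mathcal{F}(\sigma_\nu))$ splitting $\mu_L^\ast$'s projection to its cokernel; automatically commuting with the (identity) differential on the cokernel then forces only a degree-$(-k{-}1)$ lift, which can be taken to be the composite of $s$ with the relevant restriction, and one checks compatibility with $d$ in $\mathcal{C}$. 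Thus the entire problem collapses to splitting, as left (or right) $H^\ast(BT)$-bimodules, the single short exact sequence
\[ 0 \longrightarrow \im \mu_L^\ast \longrightarrow H^\ast(\mathcal{X}(\nu,k))\otimes_{H^\ast(BT)} H^\ast(\mathcal{F}(\sigma_\nu)) \longrightarrow H^\ast(\mathcal{X}(\nu,k))\otimes\delta_\nu \longrightarrow 0. \]

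The key steps, in order, are as follows. First, I would write $H^\ast(\mathcal{F}(\sigma_\nu)) = H^\ast(BT)\cdot 1 \oplus H^\ast(BT)\cdot\delta_\nu$ as a free left $H^\ast(BT)$-module (claim on freeness, \ref{maps} and \ref{defi}), so that $H^\ast(\mathcal{X}(\nu,k))\otimes_{H^\ast(BT)}H^\ast(\mathcal{F}(\sigma_\nu))$ is free of rank two over $H^\ast(\mathcal{X}(\nu,k))$ with basis $1,\delta_\nu$; the submodule $\im\mu_L^\ast$ is the graph of an $H^\ast(\mathcal{X}(\nu,k))$-linear map, so splitting it amounts to showing the quotient $H^\ast(\mathcal{X}(\nu,k))\otimes\delta_\nu$ is a \emph{projective} (equivalently free) summand. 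Second, I would use the identification of the cokernel of $\mu_L^\ast$ with that of $\mu_R^\ast$ (from the proof of Lemma \ref{splitting}) to reduce to understanding the Bott-Samelson map $H^\ast(\mathcal{X}(\nu,k+2))\to H^\ast(\mathcal{X}(-\nu,k+1))\otimes_{H^\ast(BT)}H^\ast(\mathcal{F}(\sigma_\nu))$ and its cokernel; this cokernel, by the structure theory of Schubert classes in rank-two Kac-Moody flag varieties (\cite{Ku}, Ch.~9), is governed by the structure constant relating $\delta_{\nu,k+2}$ to the product $\delta_{-\nu,k+1}\cdot\delta_\nu$, and that constant is a product of entries of the Cartan matrix. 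Third — this is where the hypothesis enters — I would observe that the only obstruction to splitting is the presence of the integer $a_{ij}a_{ji}$ (or a divisor of it) as a non-unit: the relevant map becomes an isomorphism onto a direct summand precisely after inverting all primes $p \le a_{ij}a_{ji}$, exactly as in Rouquier's argument (\cite{R}, proof of Lemma 9.1), and I would import his linear-algebra computation essentially verbatim, checking only that the topological model reproduces the same matrices. Finally, I would note that the splitting so constructed is a map of \emph{bimodules} (not just left modules) by the symmetry of $\mathrm{th}(\alpha_\nu)=\tfrac12(\rho^\ast(\alpha_\nu)+\pi^\ast(\alpha_\nu))$ (claim \ref{maps}), and that it is compatible with the differentials by construction, completing the proof.

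The main obstacle I expect is the third step: pinning down exactly which primes must be inverted. The subtlety is that the naive Bott-Samelson-to-Schubert comparison involves a change-of-basis matrix over $H^\ast(BT)$ whose determinant, after reducing to the relevant low-degree piece, is (up to sign and units) the product $a_{ij}a_{ji}$ of off-diagonal Cartan integers — and in the Kac-Moody generality allowed here one must be careful that no further arithmetic (e.g.\ from the combinatorics of reduced words of length $n < m_{ij}$, or from denominators of powers of two already lurking in $\mathrm{th}(\alpha_\nu)$, cf.\ remark \ref{action}) contributes additional bad primes. I would handle this by carrying out the computation first in the universal rank-two case $G$ with Cartan matrix $\begin{pmatrix} 2 & -a \\ -b & 2\end{pmatrix}$, where everything is explicit, and then transporting the result along the rank-two parabolic $P\subset G$; the evenness and torsion-freeness of $H^\ast(\mathcal{X}(\nu,k))$ (guaranteed by the even-cell structure and Assumption \ref{torsion}) ensures that no hidden torsion interferes, so that the only primes one is forced to invert are those $p \le a_{ij}a_{ji}$, as claimed.
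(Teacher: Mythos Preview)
Your overall strategy---reduce to splitting $\mu_R^\ast$ as a bimodule map and locate the arithmetic obstruction in a rank-two Schubert calculus computation---is the right one, and coincides with the paper's. But there are two genuine gaps in the execution.

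First, your treatment of chain-level compatibility is backwards. You propose to build the section $s$ in degree $-k$ (splitting $\mu_L^\ast$) and then produce the degree $-(k{+}1)$ component as ``the composite of $s$ with the relevant restriction.'' But restriction is the differential $d_{\mathcal C}$, which goes \emph{from} degree $-(k{+}1)$ \emph{to} degree $-k$; there is no natural map in the other direction, and an arbitrary bimodule lift of $s_{-k}$ against $d_{\mathcal C}$ need not itself be a bimodule map. The correct order is to split $\mu_R^\ast$ directly in degree $-(k{+}1)$; the degree $-k$ splitting then follows by restriction and the square from Lemma~\ref{splitting}(B) guarantees compatibility. The paper in fact reduces one step further: using the pullback description $H^\ast(\mathcal X(\nu,k+2)) = H^\ast(\mathcal Y(\nu,k+2)) \otimes_{H^\ast(BG_\nu)} H^\ast(BT)$, splitting $\mu_R^\ast$ as bimodules is equivalent to splitting the map $\kappa^\ast: H^\ast(\mathcal Y(\nu,k+2)) \to H^\ast(\mathcal X(-\nu,k+1))$ as $H^\ast(BT)\otimes H^\ast(BG_\nu)$-modules, which is where the explicit computation takes place.

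Second, and more seriously, you have misidentified the governing structure constant. The coefficient of $\delta_{\nu,k+2}$ in the product $\delta_{-\nu,k+1}\cdot\delta_\nu$ is $1$, not a product of Cartan entries, so that relation carries no arithmetic obstruction. The denominator that actually forces you to invert primes $p\le a_{ij}a_{ji}$ is the constant $a$ appearing in the Chevalley-type relation
\[ \delta_{-\nu}\cdot\delta_{-\nu,k} \;=\; a\,\delta_{-\nu,k+1} + c\,\delta_{-\nu,k}, \]
together with $\delta_{-\nu,k}\cdot\delta_\nu = \delta_{\nu,k+1} - b\,\delta_{-\nu,k} + d\,\delta_{-\nu,k+1}$. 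The paper uses both relations to write down an explicit class
\[ e \;=\; \delta_\nu \;-\; \tfrac{d}{a}\,\delta_{-\nu} \;+\; \tfrac{ab+cd}{a} \;\in\; H^2(\mathcal X(\nu,k+2)), \]
and checks that the $H^\ast(BT)\otimes H^\ast(\mathcal Y(\nu,k+2))$-submodule generated by $e$, restricted to $H^\ast(\mathcal X(-\nu,k+1))$, is a complement to the image of $\kappa^\ast$; the verification uses that $e\cdot\delta_{-\nu,k} = \delta_{\nu,k+1}$ restricts to zero there, and then that $e\cdot\delta_{-\nu,k+1}=0$ follows by the second relation once $a$ is invertible. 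This is not quite ``Rouquier verbatim'': you need the explicit form of $e$ to see simultaneously that the complement is a sub-bimodule (closure under right $H^\ast(BT)$-action comes from the Schubert calculus, not from the symmetry of $\mathrm{th}(\alpha_\nu)$, which is irrelevant here) and that it is stable under $H^\ast(\mathcal Y(\nu,k+2))$.
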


\begin{proof}
By the proof of the previous lemma, the cokernel of the maps $\mu_L^*$ and $\mu_R^*$ agree with the bimodule $H^\ast(\mathcal{X}(\nu,k)) \otimes \delta_\nu$. Hence, to construt a splitting to $\mathcal{C}$, it is sufficient to establish a splitting to $\mu_R^\ast$ as bimodues. Recall that we defined the space $Y(\nu, k)$ to be the image of the Schubert variety $Z(\nu, k)$ in the homogeneous space $P/G_{\nu}$. Let $\mathcal{Y}(\nu,k)$ denote the space $EG \times_T Y(\nu,k)$. Recall also the pair of pullback diagrams considered earlier: 
\[
\xymatrix{
 \mathcal{X}(-\nu, k+1, \nu)  \ar[d]^{\pi} \ar[r]^{\quad \mu_R} & \mathcal{X}(\nu,k+2) \ar[d]^{\pi_\nu} \ar[r]^{\quad \rho} & EG/T \ar[d]^j \\
 \mathcal{X}(-\nu,k+1) \ar[r]^{\kappa} & \mathcal{Y}(\nu,k+2) \ar[r] & EG/G_\nu. 
}
\]
\smallskip
By the Eilenberg-Moore spectral sequence we notice that one gets: 
\[ H^\ast( \mathcal{X}(-\nu, k+1, \nu)) = H^\ast(\mathcal{X}(-\nu, k+1)) \otimes_{H^\ast(\mathcal{Y}(\nu,k+2))} H^\ast(\mathcal{X}(\nu, k+2)). \]
In order to construct a splitting to $\mu_R^\ast$ as bimodues, it is sufficient to construct a splitting of the map $\kappa$ in cohomology, in the category of modules over $H^\ast(BT) \otimes H^\ast(\mathcal{Y}(\nu,k+2))$. 

\smallskip
\noindent
Let us work in the cohomology ring $H^\ast(\mathcal{X}(\nu, k+2))$. Recall that by \cite[11.3.17]{Ku} there exists constants $a, d$ and elements $b, c \in H^2(BT)$ that satisfy the following relation among the Schubert basis elements in this ring: 
\[ \delta_{-\nu, k} \, \delta_\nu =  \delta_{\nu, k+1} - b \, \delta_{-\nu, k} + d \, \delta_{-\nu, k+1}, \quad \quad \delta_{-\nu} \, \delta_{-\nu, k} = a \, \delta_{-\nu, k+1} + c \, \delta_{-\nu, k}.  \]
Furthermore, the constant $a$ is non-zero, and its prime divisors $p$ are no bigger than $a_{ij}a_{ji}$. This can easily be checked by reducing to non-equivariant cohomology (see \cite[\S 10]{N}). 

\smallskip
\noindent
Now consider the class $e \in H^2(\mathcal{X}(\nu, k+2))$ given by: 
\[ e := \delta_{\nu} + y \, \delta_{-\nu} + x, \quad \quad y = -\frac{d}{a} \quad \quad x = \frac{ab + cd}{a}. \] 
We claim that the restriction to $H^\ast(\mathcal{X}(-\nu, k+1))$ of the $H^\ast(BT) \otimes H^\ast(\mathcal{Y}(\nu,k+2))$-submodule generated by the class $e$ is the complementary summand generating the splitting we seek. To prove this, consider the Serre spectral sequence for the fibration $\pi_\nu$ above. It follows from this spectral sequence that $H^\ast(\mathcal{X}(\nu, k+2))$ is a rank two free $H^\ast(BT) \otimes H^\ast(\mathcal{Y}(\nu, k+2))$-module on the classes $\{ 1, e \}$.  

\smallskip
\noindent
Let us consider the action on the class $e$ of the individual Schubert basis elements $\delta_{-\nu, i}$ $i \leq k+1$ that generate $H^\ast(\mathcal{Y}(\nu, k+2))$. It is easy to see that the classes $\{ e \, \delta_{-\nu, i}, 0 \leq i < k \}$ restrict to a free $H^\ast(BT))$-submodule in $H^\ast(\mathcal{X}(-\nu, k+1))$ that is a complement to the image of $H^\ast(\mathcal{Y}(\nu, k+2))$ as an $H^\ast(BT)$-submodule. It remains to show that this sub-module is closed under the action of $H^\ast(\mathcal{Y}(\nu, k+2))$. 

\smallskip
\noindent
Now, by the choice of our constants, we observe that $e \, \delta_{-\nu,k} = \delta_{\nu, k+1}$. In particular, $e \, \delta_{-\nu, k}$ is zero in $H^\ast(\mathcal{X}(-\nu,k+1))$, since $\delta_{\nu,k+1}$ restricts to zero in this ring. Recalling the relation $\delta_{-\nu} \, \delta_{-\nu, k} = a \, \delta_{-\nu, k+1} + c \, \delta_{-\nu, k}$, with $a \neq 0$, it also follows that $e \, \delta_{-\nu, k+1} = 0$. 

\smallskip
\noindent
The above argument shows that the $H^\ast(BT) \otimes H^\ast(\mathcal{Y}(\nu, k+2))$-submodule generated by the class $e$ is a summand in $H^\ast(\mathcal{X}(-\nu, k+1))$ that is a complement to $H^\ast(\mathcal{Y}(\nu, k+2))$. This is what we wanted to prove.

\end{proof}

\begin{remark}
Via the above splitting, and using the argument in (see \cite[proof of Prop. 9.2]{R}) it now follows that away from primes $p$ no bigger than the off-diagonal products in the Cartan matrix $a_{ij}a_{ji}$, the Bott-Samelson map $\mbox{BS}_\bullet(\nu,n)$ is a homotopy equivalence. So for example, in the case $G = PU(r+1)$, the condition is vacuous. For an arbitrary compact Lie group $G$, the only possible problematic primes are $p=2$ or $p=3$. This condition may be more restrictive if we were to consider Kac-Moody groups. 
\end{remark}

\pagestyle{empty}
\bibliographystyle{amsplain}
\providecommand{\bysame}{\leavevmode\hbox
to3em{\hrulefill}\thinspace}

\end{document}